\setlist[enumerate]{label = (\roman*)}
\newcommand{\U}{\mathrm{U}}
\newcommand{\SU}{\mathrm{SU}}
\newcommand{\Hom}{\mathrm{Hom}}
\newcommand{\mft}{\mathfrak{t}}
\newcommand{\RR}{\mathbb{R}}
\newcommand{\CC}{\mathbb{C}}
\newcommand{\ZZ}{\mathbb{Z}}
\newcommand{\QQ}{\mathbb{Q}}
\newcommand{\PP}{\mathbb{P}}
\newcommand{\NN}{\mathbb{N}}
\newtheorem{thm}{Theorem}[section]
\newtheorem{prop}[thm]{Proposition}
\theoremstyle{definition}
\newtheorem{rem}[thm]{Remark}
\newtheorem{defn}[thm]{Definition}
\newtheorem{ex}[thm]{Example}
\begin{document}

\title{Low-dimensional GKM theory}
\author{Oliver Goertsches\footnote{Philipps-Universit\"at Marburg, email:
		goertsch@mathematik.uni-marburg.de}, Panagiotis Konstantis\footnote{Philipps-Universit\"at Marburg, email: pako@mathematik.uni-marburg.de}, and Leopold
	Zoller\footnote{Ludwig-Maximilians-Universit\"at M\"unchen, email: leopold.zoller@mathematik.uni-muenchen.de}}

\maketitle

\begin{abstract}
	GKM theory is a powerful tool in equivariant topology and geometry that can be used to generalize classical ideas from (quasi)toric manifolds to more general torus actions. After an introduction to the topic this survey focuses on recent results in low dimensions, where the interaction between geometry and combinatorics turns out to be particularly fruitful.
\end{abstract}

{
\tableofcontents
}

\section{Introduction}

The seminal work of Thomas Delzant \cite{Delzant} is a
milestone in the field of torus actions, showing the wonderful interplay between torus actions,
geometric structures and combinatorics. The so-called \emph{Delzant correspondence} comprises two statements:  \emph{rigidity}, stating that a symplectic manifold $M^{2n}$ of
dimension $2n$ on which a torus of rank $n$ is acting in a Hamiltonian fashion -- i.e., a toric symplectic manifold -- is completely determined, up to equivariant symplectomorphism, by the image of its moment
map, which is a so called \emph{Delzant polytope}. And secondly, \emph{realization}: given the combinatorial datum of a Delzant polytope, one can construct a 
toric symplectic manifold whose momentum image is the given Delzant polytope.
Similar correspondences have also been observed for other types of torus actions, for example quasitoric \cite{DavisJ} or torus manifolds \cite{MasudaUnitary,  MasudaPanov}.

What these results have in common is that are naturally located in the realm of complexity zero actions, where the
\emph{complexity} of a torus action is defined to be the difference $\tfrac12\dim M -
\mathrm{rk}\,T$. If one wishes to generalize such correspondences between certain types of torus actions and combinatorial objects, there are several natural ways to proceed. One would be to increase the complexity gradually, i.e., to consider now actions of complexity one. Recently this has become an active field of research, both from the Hamiltonian and the topological point of view. Various properties that are known in the (quasi)toric case, such as the orbit space of the action or its equivariant cohomology, were investigated, see for instance \cite{KarshonTolmanQuotients, AyzenbergCherepanov, AyzenbergMasuda, HolmKessler}, and there also begin to emerge classification results for certain subclasses of actions, such as that of tall symplectic complexity one actions \cite{KarshonTolman} or certain topological complexity one actions in general position \cite{Ayzenberg}.

Another approach to obtain results in higher complexity is   given by \emph{GKM theory}, the topic of this survey. This theory and its main players, GKM actions or GKM manifolds (see Definition \ref{def:gkm manifold}), are named after Goresky, Kottwitz and MacPherson \cite{GKM}. 
 The
celebrated Chang-Skjelbred lemma (Theorem \ref{thm:csl over Q}) makes it possible, under the assumption of equivariant formality
(Definition \ref{def:equivariant formal}), to compute the equivariant cohomology
of a torus action entirely in terms of the 1-skeleton (Remark
\ref{rem:one_skeleton}) of the action. In GKM theory one relaxes the assumption on complexity completely, but instead imposes a condition
on the set of zero- and one-dimensional orbits to be as simple as possible. More precisely, the fixed point set is assumed to be finite, and the one-skeleton has to be a union of $T$-invariant
$2$-spheres, as is the case in the (quasi)toric setting. The orbit space of the $1$-skeleton then  naturally leads to a combinatorial object, specifically a graph,
where the vertices correspond to the fixed points and the edges to the invariant
$2$-spheres. Together with a natural labelling of the graph by weights of the isotropy representations at the fixed points, we arrive at the \emph{GKM graph} of the action, which acts as a replacement for the polytopes one associates to actions in the (quasi)toric world. 

This brings us back to the Delzant
correspondence which we now may try to generalize to the GKM setting. Note that it is possible
to define the notion of an \emph{abstract GKM graph}, a purely combinatorial object (cf.\ Section \ref{sec:geometricstuff}). The most basic version of the \emph{GKM correspondence}
is the map
\[
	\{\text{GKM manifolds}\} \longrightarrow \{\text{abstract GKM graphs}\}
\]
sending a GKM manifold to its GKM graph, and one may ask about \emph{rigidity} and \emph{realization} statements, i.e., in how far this map is a bijection -- modulo  appropriate isomorphisms on both sides, see Section
\ref{sec:GKMcorrespondence}. There are several variants of this correspondence, depending on the chosen coefficient ring in cohomology, as well as on the presence of additional invariant geometric structures.

The purpose of this survey is twofold: in Section  \ref{sec:gkm equiv cohom} we wish to give a comprehensible introduction to GKM theory focusing both on rational and integral aspects of the theory -- see also \cite{Tymoczko}, or \cite[Section 11]{eqcohomsurvey} for other general introductions to GKM theory. In the later sections we discuss recent developments on the GKM rigidity and realization questions. 

We argue in Section  \ref{sec:dim4} that in dimension $\leq 4$, GKM manifolds are automatically torus manifolds, so that this theory adds nothing new to the picture in these dimensions. In dimension $8$ and higher, which are discussed in Section \ref{sec:dim8}, there are no available results on GKM realization beyond the classical results in complexity zero. GKM rigidity fails
(Theorem \ref{thm:rigidity kaputt}), which is reasonable since in dimension $8$ and higher manifolds cannot be distinguished by cohomology and characteristic classes.

The study of the GKM correspondence turns out to be the most fruitful in low
dimensions, especially in dimension 6, which is the topic of Section \ref{sec:dim6}. Note that the most natural case is to
consider an action of the two-torus $T^2$ on a simply-connected $6$-manifold, as for the three-torus we would land in the realm of complexity zero actions and in case of a circle action we cannot obtain a GKM action.
Therefore in dimension $6$ GKM theory naturally intersects with the theory of complexity one actions. It turns out that, at least in the setting without additional geometric structures, in this dimension we have a fairly complete picture.

Concerning the rigidity question in dimension $6$, the GKM graph of a simply-connected integer GKM manifold with certain assumptions on the stabilizers encodes both the non-equivariant diffeomorphism and the equivariant homeomorphism type. For the first statement, which is Theorem
\ref{thm:dim6diffeotype} below, one uses the result from \cite{Wall, Jupp, Zubr} that an oriented,
simply-connected, closed $6$-manifold is uniquely determined up to
diffeomorphism by cohomological data (see \cite{MR1365849} for a nice
overview on this topic). For the second statement see  Theorem \ref{thm:onetotonecorrespondence}. On the other hand
rigidity fails in presence of nontrivial discrete isotropies even in dimension
$6$ (Example \ref{ex:rigidity kaputt in dim 6}). As an application of these rigidity statements, it follows that Tolman's \cite{Tolman} and Woodward's \cite{Woodward} examples of Hamiltonian non-Kähler actions and the Eschenburg flag are non-equivariantly diffeomorphic and equivariantly homeomorphic, see Example \ref{ex:twe}. In particular, it follows that Tolman's and Woodward's examples carry a (non-invariant) Kähler structure.

With regard to the realization question in dimension $6$ we mention two results. Some $3$-valent GKM
graphs are graph-theoretically fibrations over $n$-gons, which suggests that
such a graph fibration could be geometrically realized by a projectivization of a
complex rank $2$ vector bundle over a (quasi)toric $4$-manifold or $S^4$ (see Section
\ref{sec:gkm fibrations}). This turns out to be true, see Theorem \ref{thm:gkm fibration}. The realization problem in dimension 6 culminated in a recent
project, where we showed that all abstract $3$-valent GKM graphs with the necessary
conditions can be realized as $6$-dimensional GKM manifolds (cf.\ Theorem
\ref{thm:we gotem all in dimension 6}). While in Theorem \ref{thm:gkm fibration} we investigated the realization question for fibrations also with additional invariant almost complex, symplectic, and Kähler structures, the general realization problem remains open in the setting with geometric structures as of now.\\

\noindent {\bf{Acknowledgements.}} This work
is part of a project funded by the Deutsche Forschungsgemeinschaft (DFG, German
Research Foundation) - 452427095.

\section{GKM theory and equivariant cohomology}\label{sec:gkm equiv cohom}

\subsection{Associating a graph to a torus action}\label{sec:associatinggraph}

In GKM theory one investigates a certain class of torus actions on smooth manifolds by means of an associated combinatorial object, its GKM graph. In this section we will explain the construction of this object.

Let $T=T^k = S^1 \times \cdots \times S^1$ denote a compact $k$-dimensional torus. Its Lie algebra $\mft$ contains the lattice $\mathbb{Z}_\mathfrak{t}=\ker \exp$. We also identify the Lie algebra $\mathrm{Lie(S^1)}$ of $S^1$ with $\mathbb{R}$ such that $\mathbb{Z}\subset \mathbb{R}$ is the kernel of the exponential map on $S^1$. Then for any $\lambda\in \Hom(T,S^1)$ we may consider $d\lambda$ as an element of the weight lattice $\mathbb{Z}_\mft^*=\Hom(\mathbb{Z}_\mathfrak{t},\mathbb{Z})\subset \mft^*$ inside the dual of the Lie algebra. In fact this establishes an isomorphism $\Hom(T, S^1)\cong \mathbb{Z}_\mft^*\cong \mathbb{Z}^k$ of Abelian groups, where for the last identification we use the dual basis of the standard basis of $\mathfrak{t}=(\mathrm{Lie}(S^1))^k=\mathbb{R}^k$. Under these identifications, which will be used frequently, the tuple $(l_1,\ldots,l_k)\in \mathbb{Z}^k$ corresponds to the homomorphism $T^k\rightarrow S^1$ with $(t_1,\ldots,t_k)\mapsto t_1^{l_1}\cdot\ldots\cdot t_k^{l_k}$.

\begin{rem}
	We recall that any representation of $T$ on a complex vector space $V$
	decomposes as $V=V_0\oplus \bigoplus_{\lambda} V_\lambda$, where $V_0$ is the
	subspace of fixed vectors and, for a character $\lambda:T\to S^1 \subset \mathbb{C}$,
	$V_\lambda=\{v\in V\mid t\cdot v = \lambda(t)v \textrm{ for all }t\in T\}$ is
	the \emph{weight space} of $\lambda$. The elements $0\neq \alpha:=d\lambda\in
		{\mathbb{Z}}_\mft^*$ such that $V_\lambda\neq 0$ are called the \emph{weights}
	of the representation, and we also write $V_{\alpha}$ instead of $V_\lambda$. In case we are dealing with a representation of $T$ on
	a real vector space $V$, we can also introduce a notion of weights, as
	elements in ${\mathbb{Z}}_\mft^*/{\pm 1}$: decomposing $V=V_0\oplus W$ as
	$T$-modules, we find an invariant complex structure on $W$, turning $W$ into a
	complex representation. The weights of the real representation are by
	definition $\pm \alpha$, where $\alpha$ runs over the weights of the complex
	representation $W$. The weight space of $\pm \alpha$ is by definition $V_{\pm
				\alpha} = V_\alpha\oplus V_{-\alpha}$. Note that $\pm \alpha$ corresponds to a character $\lambda\colon T^n\rightarrow S^1$ up to sign and that $\ker \lambda$ is independent of the sign choice. 
	The action of $\ker\lambda$ on $V_{\pm \alpha}$ is trivial.
\end{rem}

Let us consider a $T$-action on a compact connected smooth manifold $M$. We assume that the action has finitely many fixed points, i.e., that the fixed point set
\[
	M^T = \{p\in M\mid t p = p \textrm{ for all } t\in T\}
\]
is finite. For each $p\in M^T$, we may consider the isotropy representation of $T$ on $T_pM$. The finiteness of the fixed point set implies that there are no fixed vectors in $T_p M$ as otherwise the equivariant exponential map would yield a positive dimensional fixed manifold through $p$. As a consequence, $T_p M$ decomposes as a sum of irreducible $2$-dimensional real $T$-representations with nontrivial weights. Observe that this necessarily implies that $M$ is of even dimension in case $M^T$ is nonempty. As of now several of these weights might belong to the same weight space in the above sense.

However we imposes an additional assumption of the weights: to be precise we note that the notion of pairwise linear independence is meaningful for elements of a vector space that are well-defined only up to sign, and demand that for each $p$, the weights of the isotropy $T$-representation at $p$ are pairwise linearly independent in $\mathbb{Z}_\mft^*/{\pm 1}\subset \mft^*/{\pm 1}$. In particular, all weight spaces of $T_pM$ are $2$-dimensional.

\begin{rem} 
Recall that, given an action of a compact Lie group $K$ on a smooth manifold
$M$, the set of $K$-fixed points forms a submanifold of $M$ (possibly with
connected components of varying dimension). Thus, given a weight $\alpha\in {\mathbb
		Z}_\mft^*/{\pm 1}$ of the isotropy representation at $p$ with character
$\lambda$, we can consider the submanifold $M^{\ker \lambda}$ of $M$, and its
component $N$ containing $p$. Its tangent space at $p$ is $T_p N =
	(T_pM)_{\alpha}$, i.e., equal to the weight space of $\alpha$.
\end{rem}
Thus under the above assumptions, for any weight $\alpha$ of the isotropy representation at $p$, there is a $2$-dimensional $T$-invariant submanifold $N$ containing $p$, such that $T_pN = V_\alpha$. By the equality of Euler characteristics $\chi(N) = \chi(N^T)$ (\cite{Kobayashi}, see also \cite[Theorem 9.3]{eqcohomsurvey} for a proof using equivariant cohomology) it follows that $N$ is a closed surface with positive Euler characteristic, and hence diffeomorphic to $S^2$ or $\mathbb{R}P^2$. We can rule out the latter case by assuming that $M$ is orientable: then, as the $T$-action induces an orientation on the normal bundle of $N$, the submanifold $N$ is orientable as well. Now we have $N=S^2$ and there are exactly two $T$-fixed points in $N$ as $\chi(N)=2$. We have shown:
\begin{prop}\label{prop:Ntwospheres} Let $M^{2n}$ be a compact, connected, orientable smooth manifold, equipped with an action of a torus $T$ with finite fixed point set, such that for all $p\in M^T$ the weights of the irreducible factors of the isotropy representation at $p$ are pairwise linearly independent. Then for each of the $n$ weights $\alpha$ there is an invariant two-sphere $S_\alpha$ through $p$ such that $T_p(S_\alpha)$ is the weight space of $\alpha$.
\end{prop}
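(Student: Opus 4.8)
The plan is to produce $S_\alpha$ as a connected component of the fixed point set of a subtorus of $T$, and then to identify that component with $S^2$ by an Euler characteristic computation. Fix $p\in M^T$ and one of the $n$ weights $\alpha$ of the isotropy representation at $p$; choose a character $\lambda\colon T\to S^1$ with $d\lambda=\pm\alpha$ (the sign is irrelevant, as it does not affect $\ker\lambda$), and let $N$ be the connected component of $M^{\ker\lambda}$ containing $p$. By the remark preceding the proposition we have $T_pN=(T_pM)_\alpha$, which is $2$-dimensional by the linear independence hypothesis; since $N$ is a connected embedded submanifold it is therefore a surface, and being a closed subset of the compact manifold $M$ it is a closed surface. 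As $T$ is abelian, $M^{\ker\lambda}$ and hence $N$ are $T$-invariant, and $N^T=N\cap M^T$ is finite and nonempty, since it contains $p$.

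Next I would pin down the diffeomorphism type of $N$. Applying the equality of Euler characteristics $\chi(N)=\chi(N^T)$ quoted above to the $T$-action on $N$ gives $\chi(N)=|N^T|\geq 1>0$, and a closed surface of positive Euler characteristic is diffeomorphic to $S^2$ or to $\mathbb{R}P^2$. It remains to exclude $\mathbb{R}P^2$, and this is where orientability of $M$ enters. Consider the normal bundle $\nu$ of $N$ in $M$. It is a $T$-equivariant real vector bundle over $N$, and for each $q\in N$ the fibre $\nu_q\cong T_qM/T_qN=T_qM/(T_qM)^{\ker\lambda}$ contains no nonzero $\ker\lambda$-fixed vector. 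Consequently $\nu$ admits a $T$-invariant fibrewise complex structure — on each fibre this comes from decomposing the $\ker\lambda$-representation into nontrivial irreducible $2$-planes, patched over $N$ using an invariant metric — which in particular orients $\nu$. From $TM|_N\cong TN\oplus\nu$ together with the orientations of $TM|_N$ (induced by that of $M$) and of $\nu$ we obtain an orientation of $TN$; hence $N$ is orientable, and therefore $N\cong S^2$.

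Setting $S_\alpha:=N$ then finishes the argument, since by construction $T_p S_\alpha=T_pN=(T_pM)_\alpha$ is the weight space of $\alpha$; as a byproduct $\chi(S^2)=2$ shows that $S_\alpha$ contains exactly the two fixed points $p$ and one other. The only step that requires genuine care is the orientation of the normal bundle: one must verify that the fibrewise "no nonzero fixed vector" condition really yields a well-defined $T$-invariant complex (or merely oriented) structure on $\nu$ as a bundle, not just fibre by fibre. Everything else is either the cited input of Kobayashi on Euler characteristics of fixed point sets or the elementary classification of compact surfaces.
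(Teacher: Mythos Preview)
Your proof is correct and follows essentially the same route as the paper: take $N$ to be the component of $M^{\ker\lambda}$ through $p$, use $\chi(N)=\chi(N^T)>0$ to conclude $N\cong S^2$ or $\mathbb{R}P^2$, and rule out the latter by orienting the normal bundle via the $T$-action and using orientability of $M$. You are in fact more explicit than the paper about why the normal bundle is orientable (the paper simply asserts that ``the $T$-action induces an orientation on the normal bundle of $N$''), and your flagged caveat about patching the fibrewise complex structures into a global bundle structure is exactly the point the paper leaves implicit.
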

\begin{rem}
	Any $T$-invariant two-sphere $S$ in $M$ arises in the way described above. In fact, denoting by $H\subset T$ the subgroup of elements acting trivially on $S$, we obtain a character $\lambda:T\to T/H\cong S^1$. As $\chi(S)=2$, the sphere $S$ contains two $T$-fixed points $p$ and $q$; the corresponding isotropy representations admit the weight $\alpha= \pm d\lambda$, whose weight spaces are tangent to $S$. In particular, under the assumptions of the proposition there exist only finitely many $T$-invariant two-spheres in $M$.
\end{rem}
This allows us to construct a labelled graph. In the situation of the proposition we
\begin{enumerate}
	\item draw one vertex for each element in $M^T$,
	\item draw one edge for each $T$-invariant two-sphere in $M$, connecting the vertices corresponding to the two $T$-fixed points in $S$.
	\item label each edge with the corresponding weight of the isotropy representation, as an element of ${\mathbb{Z}}_\mft^*/{\pm 1}$.
\end{enumerate}

\begin{rem}\label{rem:one_skeleton}
	The \emph{one-skeleton} of a $T$-action on a smooth manifold $M$ is the union of at most one-dimensional orbits
	\[
		M_1:=\{p\in M\mid \dim Tp\leq 1\}.
	\]
	Any $T$-invariant two-sphere is contained in the one-skeleton $M_1$. The assumptions we made in Proposition \ref{prop:Ntwospheres} have the purpose to ensure that the part of $M_1$ which is connected to the fixed points can be encoded purely combinatorially in the labelled graph constructed above. Note in particular that the vital assumption of linear independence of the weights is the key of reducing the wild world of possible candidates for $N$ as above and opening the door to a purely combinatorial description. However in order to complete the definition of a GKM manifold, as we will in Section \ref{sec:GKMmanifolds}, there is one crucial condition missing which will ensure that $M_1$ does in fact see all (equivariant) cohomological information about $M$. This condition was named \emph{equivariant formality} in \cite{GKM} and is an equally strong and natural condition from the world of equivariant cohomology. We will introduce and discuss it in Section \ref{sec:equivariant cohomology} after a brief introduction to equivariant cohomology (cf. Definition	\ref{def:equivariant formal}). For now we just note two things: firstly, in the setup of Proposition \ref{prop:Ntwospheres} the condition of equivariant formality is equivalent to the vanishing of the odd dimensional (non-equivariant) cohomology of $M$. Hence equivariant cohomology is not needed to give the definition of a GKM manifold. Secondly, the additional condition of equivariant formality nicely completes our set of assumptions in two ways: it will not only tell us how the topology of $M$ is encoded in $M_1$ but also assure that $M_1$ is precisely the graph of two spheres constructed above, which in combination tells us that all the cohomological information is encoded in the labelled graph.
	
	As an addendum to the latter point, observe that as of now the one-skeleton of the action might be larger than the union of invariant two-spheres. 
For example, with the current set of conditions nobody prevents us to consider a free circle action, for which this associated graph would be empty, and the one-skeleton would be the whole manifold. Slightly more interesting, we could also consider the equivariant connected sum along a neighborhood of a regular orbit of an action with isolated fixed points and an action whose one-skeleton consists entirely of one-dimensional orbits. For example, the equivariant connected sum of the standard $T^2$-action on $S^4$ (see Example \ref{ex:s2n} below) with the product $S^3\times S^1$, where $T^2$ acts in standard fashion on $S^3\subset \CC^2$ and trivially on $S^1$, has a disconnected one-skeleton: one component consists of the two invariant two-spheres encoded in the graph of the action, and two further components are invariant two-tori without fixed points.
\end{rem}

\subsection{Examples} \label{sec:examples}

\begin{ex}\label{ex:s2n}
	Consider $M=S^{2n}$, the $2n$-dimensional sphere, as the unit sphere in $\CC^n\oplus \RR$. Then the $n$-dimensional torus $T= T^n=S^1\times \cdots \times S^1$ acts on $S^{2n}$ by rotating in the $n$ complex coordinates. The action has exactly two fixed points, $(0,\ldots,0,\pm 1)$, which are connected by the $n$ invariant two-spheres $S^{2n}\cap (0\oplus\cdots \oplus 0 \oplus \CC \oplus 0\oplus \cdots \oplus 0 \oplus \RR)$. The corresponding weights are, up to sign, the elements of the dual basis $\{e_i\}$ of the standard basis of the Lie algebra $\mft$.
	\begin{center}
		\begin{tikzpicture}

			\node (a) at (0,0)[circle,fill,inner sep=2pt] {};
			\node (b) at (6,0)[circle,fill,inner sep=2pt]{};
			\node at (3,.85) {\textcolor{blue}{$\pm e_2$}};
			\node at (3,1.45) {\textcolor{blue}{$\pm e_1$}};
			\node at (3,-.87) {\textcolor{blue}{$\pm e_{n-1}$}};
			\node at (3,-1.45) {\textcolor{blue}{$\pm e_n$}};

			\draw [very thick](a) to[in=160, out=20] (b);
			\draw [very thick](a) to[in=140, out=40] (b);
			\draw [very thick](a) to[in=200, out=-20] (b);
			\draw [very thick](a) to[in=220, out=-40] (b);
			\draw [very thick, dotted] (3,-.3) -- (3,.3);

		\end{tikzpicture}
	\end{center}
\end{ex}

\begin{ex}\label{ex:CPn}
	Consider $\CC P^n$, complex projective $n$-space, with the $T^n$-action given in homogeneous coordinates by
	\[
		(t_0,\ldots,t_{n-1})\cdot [z_0:\ldots :z_n] = [t_0z_0:\ldots:t_{n-1}z_{n-1}:z_n].
	\]
	This action has the $n+1$ fixed points $[1:0:\ldots :0],[0:1:0:\ldots:0],\ldots,[0:\ldots :0:1]$, and for any pair of fixed points there is an invariant $S^2 \cong \CC P^1$ of the form $\{[0:\ldots:0:z:0:\ldots:0:w:0:\ldots:0]\}$. The corresponding graph is thus the complete graph on $n+1$ vertices. 
	\begin{figure}[h]
	\centering
	\includegraphics{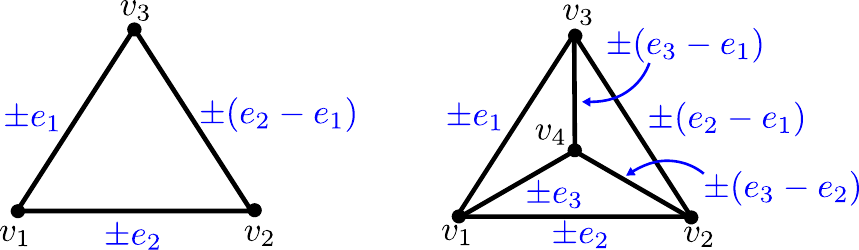}
	\caption{Graphs of $\CC P^2$ and $\CC P^3$}
	\label{fig:cp2cp3}
\end{figure}
\end{ex}

\begin{rem}\label{rem:Hamiltonian}
Recall that a $T$-action on a symplectic manifold $(M,\omega)$ is \emph{Hamiltonian} if $T$ leaves $\omega$ invariant and there is a \emph{momentum map} $\mu\colon M\rightarrow \mathfrak{t}^*$. The latter is a $T$-invariant map defined by the property that for any $\xi\in\mathfrak{t}$ the exterior differential of the function $p\mapsto \mu(p)(\xi)$ agrees with the contraction of $\omega$ along the fundamental vector field of $\xi$.  The momentum map $\mu$ maps every invariant $2$-sphere onto an interval in $\mathfrak{t}^*$ whose slope is given by the weight associated to the sphere -- this follows directly from the fact that by definition of a momentum map, for any $p\in M$ the image of $d\mu_p$ is the annihilator of ${\mathfrak{t}}_p$. In this setting, the image of the union of the $T$-invariant two-spheres gives an edgewise affine linear immersion of the graph in the vector space $\mft^*$. Note that the edges may meet in points different from the images of the fixed points. This kind of immersion gives a nice way to display certain labelled graphs (see the examples below). Reading off the exact labels of the GKM graph from the slopes of the embedded edges needs some piece of additional information e.g.\ such as the labels being primitive.
\end{rem}

\begin{ex} \label{ex:generaltoric}
	Generalizing Example \ref{ex:CPn}, we may consider any symplectic toric manifold $M$. As is well-known, the momentum image is a convex polytope, called the momentum polytope. Its vertices are exactly the images of the $T$-fixed points in $M$. The preimages of an $i$-dimensional face is a $2i$-dimensional invariant symplectic submanifold. In particular, the edges of the momentum polytope correspond to $T$-invariant symplectic two-spheres, hence the graph of $M$ is the one-skeleton of the momentum polytope. The label of an edge is the primitive element pointing in direction of its slope, as explained above.
\end{ex}

\begin{ex}\label{ex:flag}
	Guillemin--Holm--Zara considered, on homogeneous spaces of the form $G/H$, where $H\subset G$ are compact and connected Lie groups of equal rank, the action of a maximal torus $T\subset H$ by left multiplication. These actions always satisfy the conditions of the previous section \cite{GHZhom}, and they determined the associated graph in terms of the root systems of $H$ and $G$ with respect to the common maximal torus $T$. Briefly, the vertices are in one-to-one correspondence with the quotient of Weyl groups $W(G)/W(H)$ (both with respect to the maximal torus $T$), labels of the edges at the origin are in one-to-one correspondence to the roots of $G$ that are not roots of $H$, and at the other edges they are determined by $W(G)$-equivariance. As an example, the GKM graph of the $T^2$-action on the flag manifold $\SU(3)/T^2$ is as follows, where the labels of the edges are given by the primitive vectors pointing in direction of the corresponding slopes. The picture is strongly related to the momentum image of the $T^2$-action, when considering the flag manifold as a coadjoint orbit. See the explanation in Remark \ref{rem:Hamiltonian}.
	\begin{center}
		\begin{tikzpicture}
			\draw[step=1, dotted, gray] (-3.5,-4.5) grid (3.5,2.5);

			\draw[very thick] (1,1) -- ++(-3,0) -- ++(4,-4) -- ++(0,3);
			\draw[very thick] (2,0) -- ++(-1,1);
			\draw[very thick] (2,-3)--++(-1,0);
			\draw[very thick] (1,-3)--++(0,4);
			\draw[very thick] (1,-3) -- ++(-3,3) -- ++ (4,0);
			\draw[very thick] (-2,0) -- ++(0,1);

			\node at (1,1)[circle,fill,inner sep=2pt]{};

			\node at (-2,1)[circle,fill,inner sep=2pt]{};

			\node at (1,-3)[circle,fill,inner sep=2pt]{};

			\node at (-2,0)[circle,fill,inner sep=2pt]{};

			\node at (2,0)[circle,fill,inner sep=2pt]{};

			\node at (2,-3)[circle,fill,inner sep=2pt]{};

		\end{tikzpicture}
	\end{center}

\end{ex}
\begin{ex}\label{ex:eschenburg}
	Eschenburg's twisted flag manifold $SU(3)//T^2$ is maybe the most famous example of a biquotient. Concretely, it is given as the orbit space of the action of the two-dimensional torus $T^2$ on $\SU(3)$
	\[
		(s,t)\cdot A:=\left(\begin{matrix}s^2t^2 \\ & 1 \\ & & 1\end{matrix}\right) A\left(\begin{matrix}\bar{s}\\ & \bar{t} \\ & & \bar{s}\bar{t}\end{matrix}\right).
	\]
	The interest in biquotients originated in Riemannian geometry, as a source of
	new examples of Riemannian manifolds with positive sectional curvature
	\cite{EschenburgHabil}; in particular the Eschenburg flag admits such a
	metric. In \cite{GKZsympbiquot} we constructed symplectic structurs with
	compatible Hamiltonian actions on several infinite families of biquotients, by
	considering them as the total spaces of symplectic fibrations. The Eschenburg
	flag admits a GKM action of a two-dimensional torus, acting in the description
	above as left multiplication of the maximal torus of $\SU(3)$. Its GKM graph
	is as follows, where as in the previous examples the labels are the primitive
	vectors in direction of the slopes. As in the previous example, this presentation of the GKM graph comes from the momentum image.
	\begin{center}
		\begin{tikzpicture}
			\draw[step=1, dotted, gray] (-3.5,-4.5) grid (3.5,2.5);

			\draw[very thick] (1,1) -- ++(-3,0) -- ++(4,-4) -- ++(0,3) -- ++(-1,1);
			\draw[very thick] (2,-3)--++(-1,+2)--++(0,2)++(0,-2)--++(-1,1)--++(2,0)++(-2,0)--++(-2,1);
			\node at (1,1)[circle,fill,inner sep=2pt]{};

			\node at (-2,1)[circle,fill,inner sep=2pt]{};

			\node at (1,-1)[circle,fill,inner sep=2pt]{};

			\node at (0,0)[circle,fill,inner sep=2pt]{};

			\node at (2,0)[circle,fill,inner sep=2pt]{};

			\node at (2,-3)[circle,fill,inner sep=2pt]{};

		\end{tikzpicture}
	\end{center}

\end{ex}

\subsection{Equivariant cohomology}\label{sec:equivariant cohomology}

Throughout this section we assume $G$ to be a
compact Lie group and $X$ a $G$-space. The goal of this section is to define a cohomology
theory which encodes not only the topology of $X$ but also the action of $G$ on
$X$. One obvious idea would be to take the cohomology of the quotient space
$X/G$ as equivariant cohomology of the action. But even the simple example of the standard rotation of
$X=S^2$ by $G=S^1$ shows that the homotopy type of the
quotient space (a real line segment) does not carry any interesting information
about the action. On the other hand, if $G$ is acting freely on a smooth manifold
$X$, then the orbit space $X/G$ is a closed manifold and hence does contain interesting cohomological information. Hence $H^*(X/G)$ can be considered as a reasonable candidate for a cohomology theory in the case of a free action.

One motivation for definition of equivariant cohomology given below is to, in the light of the above discussion, replace $X$ by a homotopy equivalent space on which $G$ acts freely.
This is achieved by means of the universal principal $G$-bundle. Recall that for every Lie group, there
is principal $G$-bundle $EG$ which is up to equivariant homotopy equivalence
uniquely determined by the fact, that $EG$ is weakly contractible and $G$ acts freely
from the right on $EG$, see \cite{Husemoller}. Its quotient $BG: = EG/G$ is
called a \emph{classifying space} for $G$.

	We now replace $X$ with the homotopy equivalent $EG \times X$ on which $G$ acts via the free diagonal action
	$g \cdot (e,x) := (eg^{-1},gx)$. We denote by $X_G :=EG \times_G X$ its quotient, which is also
	known as the \emph{Borel construction} or as the \emph{homotopy quotient}. Hence
	$X_G$ is the associated $X$-bundle to the principal $G$-bundle $EG \to BG$.
	Therefore we obtain a fibration
	\[
		X \longrightarrow X_G \longrightarrow BG.
	\]
	which is called the \emph{Borel fibration}. In what follows we fix a choice of universal bundle $EG\rightarrow BG$. Due to the uniqueness up to equivariant homotopy equivalence, the following definition is independent of this choice up to natural isomorphism.

\begin{defn}\label{def:equivariant cohomology}
	Let $\Lambda$ be a ring with unit. We define the \emph{equivariant cohomology}
	of a $G$-space $X$ by
	\[
		H_G^\ast(X; \Lambda) := H^\ast (X_G;\Lambda) = H^\ast (EG \times_G X; \Lambda)
	\]
	where $H^\ast(-, \Lambda)$ means the singular cohomology of space with
	coefficients in the ring $\Lambda$.
\end{defn}

\begin{rem}\label{rem:borelVSdeRham}
	For $\Lambda =\RR$ there are other possibilities to define equivariant cohomology on smooth manifolds, namely the so-called Cartan and Weil models. For the Cartan model cf.\ \cite{MR4205963} or \cite{eqcohomsurvey}; see \cite{Meinrenken} for an overview on the three models.
\end{rem}

\begin{ex}\label{ex:equivariant cohomology of a point}
	A very important special case is the trivial action of $G$ on a one point space $*$. Then $*_G=(EG\times*)/G\cong EG/G$, hence $H^\ast_G(*; \Lambda) = H^\ast(BG;\Lambda)$. For this article the
	most important group $G$ is the torus $T^k$. Next, we would like explain
	the cohomology of $BT^k$.

	We start with $k=1$, thus $T^1 =S^1$. For all $n \in \NN$ the complex Hopf fibrations
	\[
		S^1 \longrightarrow S^{2n+1} \longrightarrow \CC\PP^{n}
	\]
	are $S^1$--principal bundles. Hence $S^1$ acts also free on $S^\infty =
		\bigcup_{n \in \NN} S^{2n+1}$ with orbit space $\CC\PP^{\infty} = \bigcup_{n \in
			\NN} \CC\PP^{n}$. Since $S^\infty$ is weakly contractible, we see that $ES^1 =
		S^\infty$ and thus $BS^1 = \CC\PP^\infty$. The cohomology 	$H^*(\CC\PP^{\infty};\Lambda)$ is a polynomial ring with coefficients in $\Lambda$ in one variable
	\[
		H^\ast( \CC\PP^{\infty}; \Lambda) = \Lambda[x]
	\]
	where $x \in H^2( \CC\PP^{\infty}; \Lambda) \cong \Lambda$ is a generator. A higher dimensional torus $T^k$ acts freely in the obvious way on the weakly contractible space $S^\infty
		\times \ldots \times S^\infty$ ($k$ times). Hence $ET^k = S^\infty \times \ldots \times S^\infty$
	and therefore $BT^k = \CC\PP^{\infty}  \times \ldots \times \CC\PP^{\infty}$. We infer
	that
	\[
		H^\ast(BT^k; \Lambda) = \Lambda[x_1, \ldots,x_k]
	\]
	is a polynomial ring in $k$ variables such that $x_i \in H^2(BT^k; \Lambda)$.
\end{ex}

{\begin{rem}\label{rem:characters and cohomology of BT}
	An important fact is the identification of $\Hom(T, S^1)$ (the group of
	homomorphisms between $T$ and $S^1$) with $H^2(BT; \ZZ)$. Every $\alpha \in
		\Hom(T,S^1)$ defines a map $BT \to BS^1$ since $\alpha$ defines an
	$S^1$-bundle $L_{\alpha}:=ET \times_\alpha S^1$ over $BT$. Furthermore we have
	$BS^1 = \CC\PP^{\infty}$ which is a $K(\ZZ,2)$, thus the map $BT \to BS^1$
	defines an element in $H^2(BT; \ZZ)$ which is the first Chern class
	of $L_{\alpha}$. It is easy to see that the mapping $\Hom(T, S^1)
		\to H^2(BT;\ZZ)$ is a group isomorphism.
\end{rem}}

\begin{ex}\label{ex:S1}
As a first application of the above remark we compute $H_T^*(S^1;\ZZ)$ where $T$ acts via $\alpha\in\Hom(T,S^1)$. The Borel construction $S^1_T$ is just the circle bundle $L_\alpha$ over $BT$ defined above with Chern class $\alpha\in H^*(BT;\mathbb{Z})$. Hence the cohomology is $H_T^*(S^1;\ZZ)=H^*(BT;\ZZ)/(\alpha)$.
\end{ex}

\begin{ex}
	\label{ex:equiv cohomology of a free action}
	Let us consider a free $G$-space $X$. It follows
	that $H^\ast_G(X;\Lambda)$ is isomorphic to $H^\ast(X/G;\Lambda)$.
	To see this, note that $X \to X/G$ is a
	$EG$-fiber bundle by the Borel construction
	\[
		EG \longrightarrow EG \times_G X \longrightarrow X/G
	\]
	(since $EG$ is a $G$-space). But $EG$ is a weakly contractible space and by
	the long exact homotopy sequence of the fibration above it follows that $X_G$
	and $X/G$ are weakly homotopy equivalent. The latter implies that their
	cohomology rings coincide for all coefficient rings $\Lambda$.
\end{ex}

\begin{rem}\label{rem:algebrastructure} Given a $G$-equivariant map $f\colon X\rightarrow Y$ we obtain an induced map $X_G\rightarrow Y_G$ by applying $f$ in the second coordinate of the Borel construction. In particular we obtain an induced map $H_G^*(Y;\Lambda)\rightarrow H_G(X;\Lambda)$
making equivariant cohomology a functor on the category of $G$-spaces. In particular $H_G^*(X;\Lambda)$ is naturally an algebra over 
$H^*_G(*;\Lambda)=H^*(BG;\Lambda)$ by considering the equivariant map $X\rightarrow *$.
\end{rem}
With the naturality of equivariant cohomology established, it is not hard to see that all standard features from ordinary cohomology like the Mayer-Vietoris sequence and homotopy invariance carry over to the equivariant setting. One just applies the non-equivariant versions to the Borel construction. As a demonstration we have the following important

\begin{ex}\label{ex:S2}
We compute the equivariat cohomology of the $T$-action on $S^2$ defined by rotating along $\alpha\in\Hom(T,S^1)$. We use the Mayer-Vietoris sequence of $S^2=U\cup V$ where $U$ is the complement of the equator and $V$ is the complement of the two fixed poles. This decomposition may not be the most direct one for this special case but it has the advantage of generalizing to arbitrary graphs of two-spheres which will be helpful in the GKM description of equivariant cohomology, Proposition \ref{prop:GKMdescription}.

We observe that $V$ deformation retracts equivariantly to the fixed point set $\{N,S\}$ hence $H_T^*(V;\ZZ)= H^*_T(\{N\};\ZZ)\oplus H^*_T(\{S\};\ZZ)=H^*(BT;\ZZ)\oplus H^*(BT;\ZZ)$. The neighbourhood $U$ deformation retracts equivariantly onto the equator $S^1$. Hence by Example \ref{ex:S1}, $H^*_T(V;\ZZ)=H^*_T(S^1;\ZZ)=H^*(BT;\ZZ)/(\alpha)$. The intersection $U\cap V$ deformation retracts to two copies of $S^1$. Now on the cohomological level $U\cap V\rightarrow V$ induces the projection map $H^*(BT;\ZZ)^2\rightarrow (H^*(BT;\ZZ)/(\alpha))^2$ where the factor of each fixed point projects to the factor of the circle close to it and maps trivially to the other. This is indeed just the standard projection as all maps are maps of $H^*(BT;\ZZ)$-algebras as explained in Remark \ref{rem:algebrastructure}. The inclusion $U\cap V\rightarrow U$ induces the diagonal map. Dividing by the diagonal $\Delta$ we identify $(H^*(BT;\ZZ)/(\alpha))^2/\Delta\cong H^*(BT;\ZZ)/(\alpha)$ where the map is given by taking the difference of the two components. Using this, the Mayer-Vietoris sequence simplifies to the short exact sequence
\[0\rightarrow H_T^*(S^2;\ZZ)\rightarrow H^*(BT;\ZZ)\oplus H^*(BT;\ZZ)\rightarrow H^*(BT;\ZZ)/(\alpha)\rightarrow 0.\]
Hence $H_T^2(S^2;\ZZ)$ embeds as the subalgebra $\{(f,g)\in H^*(BT;\ZZ)^2~|~ f\equiv g \mod \alpha\}$ of the equivariant cohomology of the fixed point set.
\end{ex}

Let us now restrict to the case $G=T=T^k$ of a $k$-dimensional torus.

\begin{defn}\label{def:equivariant formal}
	We say that the action of $T$ on $X$ is \emph{equivariantly formal (over $\Lambda$)} if
	$H_T^\ast(X;\Lambda)$ is a free $H^\ast(BT;\Lambda)$-module.
\end{defn}

There are many equivalent conditions for equivariant formality over a field, see e.g.\ \cite[Theorem 7.3]{eqcohomsurvey}. Over the integers the situation is more subtle. For the purpose of this survey the following observation will be sufficient. We note that the proof relies on the Borel localization theorem which is responsible for the additional topological assumption of $X$ being a finite CW-complex. These may be relaxed, see e.g.\ the discussion surrounding \cite[Theorem 3.2.6]{AlldayPuppe}.
\begin{prop}\label{prop:ef} Let $X$ be a finite CW-complex with a $T$-action with discrete fixed points and $\Lambda=\mathbb{Q},\mathbb{Z}$. Then the following conditions are equivalent:
\begin{enumerate}
\item $X$ is equivariantly formal.
\item $H^{odd}(X;\Lambda)=0$.
\end{enumerate}
In this case the restriction map $H^*_T(X;\Lambda)\rightarrow H^*(X;\Lambda)$ is surjective and its kernel is precisely the ideal $H^+(BT;\Lambda) \cdot H_T^*(X;\Lambda)$.
\end{prop}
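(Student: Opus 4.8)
The plan is to read off the equivalence $(i)\Leftrightarrow(ii)$ together with the two addenda from the Serre spectral sequence of the Borel fibration $X\to X_T\to BT$. Since $BT$ is simply connected the coefficient system is trivial, and since $H^*(BT;\Lambda)$ is a free $\Lambda$-module (Example~\ref{ex:equivariant cohomology of a point}) the $E_2$-term is $E_2^{p,q}=H^p(BT;\Lambda)\otimes_\Lambda H^q(X;\Lambda)$ with no Tor corrections, converging to $H^*_T(X;\Lambda)$. As the spectral sequence of a fibration over $BT$ this is one of $H^*(BT;\Lambda)$-modules, compatibly with the module structure of Remark~\ref{rem:algebrastructure}; hence $E_\infty$ is the associated graded of a filtration of $H^*_T(X;\Lambda)$ by $H^*(BT;\Lambda)$-submodules, and the edge homomorphism $H^*_T(X;\Lambda)\twoheadrightarrow E_\infty^{0,*}\hookrightarrow E_2^{0,*}=H^*(X;\Lambda)$ is restriction to a fiber. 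I would then prove that $(i)$, $(ii)$ and collapse of this spectral sequence at $E_2$ are mutually equivalent, the surjectivity and kernel statements being immediate once collapse is known.

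For $(ii)\Rightarrow(i)$ I would argue by parity: if $H^{odd}(X;\Lambda)=0$ then, as also $H^{odd}(BT;\Lambda)=0$, the group $E_2^{p,q}$ vanishes unless $p$ and $q$ are both even; since $d_r$ shifts the bidegree by $(r,1-r)$ it can never connect two nonzero terms, so the spectral sequence collapses at $E_2$. Then $\mathrm{gr}\,H^*_T(X;\Lambda)\cong H^*(BT;\Lambda)\otimes_\Lambda H^*(X;\Lambda)$ as graded $H^*(BT;\Lambda)$-modules, which over a field (in particular $\Lambda=\mathbb{Q}$) is free; lifting a homogeneous basis through the degree-wise finite filtration then shows $H^*_T(X;\Lambda)$ is free, i.e.\ equivariantly formal. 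Collapse also gives the last sentence: the image of restriction is $E_\infty^{0,*}=E_2^{0,*}=H^*(X;\Lambda)$, so restriction is surjective, and choosing a homogeneous $H^*(BT;\Lambda)$-basis of $H^*_T(X;\Lambda)$ whose elements restrict to a $\Lambda$-basis of $H^*(X;\Lambda)$ (possible by collapse) identifies the kernel of restriction with $H^+(BT;\Lambda)\cdot H^*_T(X;\Lambda)$ (the inclusion $\supseteq$ being clear, since positive-degree classes pulled back from $BT$ restrict to zero on a fiber).

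For $(i)\Rightarrow(ii)$ over $\mathbb{Q}$ I would invoke the Borel localization theorem -- this is the point where the finite-CW hypothesis enters -- to conclude that restriction $H^*_T(X;\mathbb{Q})\to H^*_T(X^T;\mathbb{Q})$ has torsion kernel and cokernel; as $X^T$ is discrete this gives $\mathrm{rank}_{H^*(BT;\mathbb{Q})}H^*_T(X;\mathbb{Q})=\dim_\mathbb{Q}H^*(X^T;\mathbb{Q})=|X^T|$. If $H^*_T(X;\mathbb{Q})$ is free, restriction to a fiber identifies $H^*(X;\mathbb{Q})$ with $H^*_T(X;\mathbb{Q})\otimes_{H^*(BT;\mathbb{Q})}\mathbb{Q}$ (for instance via the collapsing Eilenberg--Moore spectral sequence; see also \cite[Theorem 7.3]{eqcohomsurvey}), whose $\mathbb{Q}$-dimension is exactly that rank. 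Hence $\dim_\mathbb{Q}H^*(X;\mathbb{Q})=|X^T|=\chi(X^T)=\chi(X)$, using discreteness of $X^T$ and the equality of Euler characteristics for torus actions (\cite{Kobayashi}, cf.\ \cite[Theorem 9.3]{eqcohomsurvey}). Comparing with $\chi(X)=\sum_i(-1)^i\dim_\mathbb{Q}H^i(X;\mathbb{Q})$ forces the odd Betti numbers to vanish, i.e.\ $H^{odd}(X;\mathbb{Q})=0$.

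It remains to deal with $\Lambda=\mathbb{Z}$, which I expect to be the main obstacle. For $(i)\Rightarrow(ii)$ I would reduce to coefficient fields: freeness of $H^*_T(X;\mathbb{Z})$ over $H^*(BT;\mathbb{Z})$ implies, after $-\otimes_\mathbb{Z}k$ (which introduces no Tor terms), equivariant formality over $k=\mathbb{Q}$ and over each $\mathbb{Z}/p$, hence $H^{odd}(X;k)=0$ for all these $k$ and therefore $H^{odd}(X;\mathbb{Z})=0$ by universal coefficients. The hard direction is $(ii)\Rightarrow(i)$ over $\mathbb{Z}$: the parity argument still collapses the integral spectral sequence, but it only yields $\mathrm{gr}\,H^*_T(X;\mathbb{Z})\cong H^*(BT;\mathbb{Z})\otimes_\mathbb{Z}H^*(X;\mathbb{Z})$, which is $H^*(BT;\mathbb{Z})$-free precisely when $H^*(X;\mathbb{Z})$ is torsion-free -- automatic in the GKM situations of interest but the genuine obstruction for general finite CW-complexes -- so here one should appeal to the refined integral analysis around \cite[Theorem 3.2.6]{AlldayPuppe}. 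The surjectivity and kernel description of the restriction map follow from collapse exactly as over $\mathbb{Q}$.
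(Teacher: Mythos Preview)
Your approach via the Serre spectral sequence of the Borel fibration is precisely the one the paper uses; the paper's proof is essentially a pointer to references (\cite[Lemma~2.1]{MasudaPanov} for the equivalence, \cite[Theorem~1.1]{FranzPuppeIntegral} for the integral surjectivity and kernel statement) together with the parity--collapse observation you spell out in detail. Two points deserve comment.

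First, your argument for $(i)\Rightarrow(ii)$ over $\mathbb{Z}$ has a gap. You reduce to the coefficient fields $k=\mathbb{Q}$ and $k=\mathbb{Z}/p$ and want to conclude $H^{odd}(X;k)=0$ for each, but you have only established $(i)\Rightarrow(ii)$ over $\mathbb{Q}$: your Borel-localization and Euler-characteristic argument does not transfer to $\mathbb{Z}/p$, since localization for torus actions is a rational phenomenon. The repair is simpler than going through all primes: freeness of $H^*_T(X;\mathbb{Z})$ over $H^*(BT;\mathbb{Z})$ forces $H^*(X;\mathbb{Z})\cong H^*_T(X;\mathbb{Z})\otimes_{H^*(BT;\mathbb{Z})}\mathbb{Z}$ to be $\mathbb{Z}$--free, and then the rational vanishing $H^{odd}(X;\mathbb{Q})=0$ already gives $H^{odd}(X;\mathbb{Z})=0$.

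Second, your instinct that the integral direction $(ii)\Rightarrow(i)$ is the delicate one is correct, but the reference you reach for, \cite[Theorem~3.2.6]{AlldayPuppe}, is the rational localization theorem and does not settle it. The paper cites instead \cite[Lemma~2.1]{MasudaPanov}, whose argument combines the collapsed spectral sequence with integral restriction to the discrete fixed point set $H^*_T(X;\mathbb{Z})\to H^*_T(X^T;\mathbb{Z})=\bigoplus H^*(BT;\mathbb{Z})$ in order to rule out torsion and obtain genuine freeness; this is where discreteness of $X^T$ enters in an essential way beyond mere parity.
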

\begin{proof}
The equivalence of $(i)$ and $(ii)$ was proved e.g.\ in \cite[Lemma 2.1]{MasudaPanov}. Note that the authors state the lemma for manifolds but the proof can be applied to the more general setting, as well as $\Lambda=\mathbb{Q}$. For the latter case see also \cite[Corollary 4.2.3]{AlldayPuppe} and combine it with Borel localization. The vanishing of $H^{odd}(X;\Lambda)$ implies that the Serre spectral sequence of the Borel fibration collapses at the $E_2$-page. This implies the surjectivity of $H^*_T(X;\Lambda)\rightarrow H^*(X;\Lambda)$ as well as the description of the kernel. This is standard for field coefficients; the integral case is discussed e.g.\ in \cite[Theorem 1.1]{FranzPuppeIntegral}.
\end{proof}

The above proposition tells us that in the equivariantly formal case, the equivariant cohomology encodes the non-equivariant cohomology. Hence it suffices to study the equivariant cohomology. The key to doing this is the \emph{Chang-Skjelbred lemma} which tells us that in the equivariantly formal case $H_T^*(X;\Lambda)$ is entirely encoded in the one-skeleton $X_1$ of the action
(see Remark \ref{rem:one_skeleton}). The original rational version is due to {\cite[Lemma 2.3]{ChangSkjelbred}}. The more recent integral version was proved in {\cite[Corollary 2.2]{FranzPuppe}}. In both cases the injectivity of the first map in the sequence below is a consequence of the classical Borel localization theorem (see e.g.\ \cite[Theorem 3.2.6]{AlldayPuppe}).

\begin{thm}\label{thm:csl over Q}
	Let $T$ act on a finite CW-complex $X$ such that the action is equivariantly formal over $\Lambda=\mathbb{Q},\mathbb{Z}$. In the case $\Lambda=\mathbb{Z}$ assume additionally that for every $p \not \in X_1$ the
	isotropy group $T_p$ is contained in a proper subtorus. Then we have an
	exact sequence
	\[
		0 \longrightarrow H_T^\ast(X;\Lambda) \longrightarrow H_T^\ast(X^T; \Lambda) \longrightarrow H^{\ast
				+1}_T(X_1,X^T;\Lambda),
	\]
	where the last map is the connecting homomorphism in the long exact sequence of
	the pair $(X_1,X^T)$.
\end{thm}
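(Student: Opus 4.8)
The plan is to derive the exact sequence from the Borel localization theorem together with a Mayer--Vietoris / long-exact-sequence argument along the filtration of $X$ by orbit type. First I would recall the statement of Borel localization in the form relevant here: if $X$ is equivariantly formal over $\Lambda$ (and, in the integral case, satisfies the stated condition on isotropy off the one-skeleton), then the restriction map $H_T^\ast(X;\Lambda)\to H_T^\ast(X^T;\Lambda)$ is injective. Over $\mathbb{Q}$ this is the classical statement that the kernel of restriction to the fixed set is the $\Lambda$-torsion submodule, which vanishes by freeness; over $\mathbb{Z}$ one needs the refinement of Franz--Puppe, and this is exactly where the extra hypothesis on $T_p$ for $p\notin X_1$ enters, since it guarantees that the relevant localized contributions away from the one-skeleton are killed. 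This already gives exactness at the first nontrivial spot, i.e. injectivity of $H_T^\ast(X;\Lambda)\to H_T^\ast(X^T;\Lambda)$.

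Next I would establish exactness at $H_T^\ast(X^T;\Lambda)$, i.e. that the image of $H_T^\ast(X;\Lambda)$ equals the kernel of the connecting homomorphism $H_T^\ast(X^T;\Lambda)\to H_T^{\ast+1}(X_1,X^T;\Lambda)$. The key is the commutative diagram relating the long exact sequences of the pairs $(X,X^T)$ and $(X_1,X^T)$, via the restriction maps induced by the inclusion $X_1\hookrightarrow X$. One shows that a class $\xi\in H_T^\ast(X^T;\Lambda)$ lifts to $H_T^\ast(X;\Lambda)$ if and only if its image $\delta\xi\in H_T^{\ast+1}(X,X^T;\Lambda)$ vanishes; then one argues that the restriction map $H_T^{\ast+1}(X,X^T;\Lambda)\to H_T^{\ast+1}(X_1,X^T;\Lambda)$ is \emph{injective}, so that $\delta\xi=0$ is equivalent to the image of $\delta\xi$ in $H_T^{\ast+1}(X_1,X^T;\Lambda)$ being zero, which is the connecting map in the pair $(X_1,X^T)$. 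The injectivity of $H_T^\ast(X,X^T;\Lambda)\to H_T^\ast(X_1,X^T;\Lambda)$ is itself a localization statement: $H_T^\ast(X,X^T;\Lambda)$ is supported, after localization, on the isotropy subgroups occurring in $X\setminus X^T$, and the part coming from codimension $\geq 2$ strata (corank $\geq 2$ isotropy) is detected on the codimension-one part, i.e. on $X_1$. Concretely one filters $X$ by the subspaces $X^{(j)}=\{p\mid \dim\overline{Tp}\le j\}$ and runs up the filtration, using at each stage that equivariant formality forces the relevant spectral sequence / long exact sequence contributions to behave well and that, again by the hypothesis on isotropy groups off $X_1$ in the integral case, no new torsion obstructions appear.

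The main obstacle I expect is precisely the injectivity of the relative restriction map $H_T^\ast(X,X^T;\Lambda)\to H_T^\ast(X_1,X^T;\Lambda)$ in the integral case: over $\mathbb{Q}$ this follows cleanly from Borel localization applied stratum by stratum (a class supported on a stratum of corank $\ge 2$ is divisible by two coprime characters, hence by their product, and one plays off the exact sequences of successive pairs in the orbit-type filtration), but over $\mathbb{Z}$ divisibility arguments are delicate and one must invoke the hypothesis that every isotropy group $T_p$ with $p\notin X_1$ lies in a proper subtorus to ensure the localized relative cohomology in those degrees is annihilated by a single primitive character rather than only a non-primitive one. This is the content of the Franz--Puppe integral Chang--Skjelbred lemma, so in the write-up I would state the localization input as a black box (citing \cite{AlldayPuppe} for the rational localization theorem and \cite{FranzPuppe} for the integral refinement) and present the argument at the level of the two long exact sequences and the orbit-type filtration, rather than reproving the localization theorem itself.
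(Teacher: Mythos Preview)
The paper does not give its own proof of this theorem: it simply attributes the rational version to Chang--Skjelbred \cite[Lemma 2.3]{ChangSkjelbred}, the integral version to Franz--Puppe \cite[Corollary 2.2]{FranzPuppe}, and notes that injectivity of the first map is Borel localization (e.g.\ \cite[Theorem 3.2.6]{AlldayPuppe}). Your proposal therefore already goes well beyond the paper's treatment, and the route you sketch---Borel localization for injectivity, then the commutative square of connecting homomorphisms for the pairs $(X,X^T)$ and $(X_1,X^T)$, reducing exactness at $H_T^\ast(X^T;\Lambda)$ to a statement about the relative restriction, and finally handling that via the orbit-type filtration with the appropriate localization input---is exactly the standard architecture behind the cited proofs.

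One remark on the reduction step: the injectivity of $H_T^{\ast}(X,X^T;\Lambda)\to H_T^{\ast}(X_1,X^T;\Lambda)$ that you isolate is not really a lemma \emph{towards} Chang--Skjelbred so much as a relative reformulation of its nontrivial half. Under equivariant formality one has $H_T^{\ast+1}(X,X^T;\Lambda)\cong \mathrm{coker}\big(H_T^\ast(X;\Lambda)\to H_T^\ast(X^T;\Lambda)\big)$, and the injectivity you want is then precisely the statement that this cokernel embeds into $H_T^{\ast+1}(X_1,X^T;\Lambda)$, i.e.\ that $\mathrm{im}(r_X)=\ker(\delta_{X_1})$. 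So when you say you will black-box the localization input from \cite{FranzPuppe} to establish this injectivity, you are in effect citing the integral Chang--Skjelbred lemma to prove itself. That is not wrong---it is exactly what the paper does---but if you intend to present an actual argument rather than a citation, the substance lies in carrying out the orbit-filtration step explicitly (the Atiyah--Bredon exactness), not in the diagram chase that precedes it.
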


The above theorem tells us that the image of $H_T^\ast(X;\Lambda) \to H_T^\ast(X^T;\Lambda)$ is equal (as subalgebras) to
the image of $H_T^\ast(X_1;\Lambda) \to H_T^\ast(X^T;\Lambda)$. Thus $H_T^\ast(X;\Lambda)$
is determined by the action on the one-skeleton of $X$.

Another important feature of equivariant formality is the following result regarding the combinatorics of the orbit type stratification. Its proof, which we shall omit, is a consequence of Borel localization. We point the reader towards \cite[Corollary 3.6.19]{AlldayPuppe} or \cite[Corollary 9.9]{eqcohomsurvey} for a dimension counting argument.
\begin{prop}\label{prop:efhasfixedpoints}
Let $X$ be a finite CW-complex with a rationally equivariantly formal $T$-action. Let $H\subset T$ be a subtorus. Then every component of $M^H$ contains a fixed point.
\end{prop}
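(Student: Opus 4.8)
The plan is to deduce the statement from the Borel localization theorem by a dimension count, along the lines indicated in the references quoted above \cite{AlldayPuppe}. The only non-formal input I will take for granted is the following standard consequence of localization: if a torus $K$ acts on a finite CW-complex $Y$, then
\[
  \dim_\QQ H^*(Y^K;\QQ) \;\leq\; \dim_\QQ H^*(Y;\QQ),
\]
and equality holds if and only if the $K$-action on $Y$ is equivariantly formal over $\QQ$. Everything else is bookkeeping with the quotient torus $T/H$ and additivity of Betti numbers over connected components.

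First I would pin down the total Betti number of $X^H$ by a short chain of inequalities. Applying the inequality above to the torus $H$ acting on $X$ gives $\dim_\QQ H^*(X^H;\QQ)\leq \dim_\QQ H^*(X;\QQ)$. Since $H$ acts trivially on $X^H$, the $T$-action on $X^H$ factors through $T/H$, and a point of $X^H$ is $T$-fixed if and only if it is $(T/H)$-fixed, so $(X^H)^{T/H}=X^T$. Applying the inequality to $T/H$ acting on $X^H$ therefore yields $\dim_\QQ H^*(X^T;\QQ)\leq \dim_\QQ H^*(X^H;\QQ)$. Because $X$ is equivariantly formal over $\QQ$ we have $\dim_\QQ H^*(X;\QQ)=\dim_\QQ H^*(X^T;\QQ)$, so the two outer terms of the chain coincide and we conclude $\dim_\QQ H^*(X^H;\QQ)=\dim_\QQ H^*(X^T;\QQ)$.

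Next I would localize this to the individual components. As $X$ is a finite CW-complex, $X^H$ has finitely many connected components $Z_1,\dots,Z_m$; since $T$ is connected each $Z_i$ is $T$-invariant with $T$-action factoring through $T/H$, we have $Z_i^{T/H}=Z_i\cap X^T$, and $X^T=\bigsqcup_{i=1}^m Z_i^{T/H}$. Applying the Borel inequality to $T/H$ acting on each $Z_i$ and summing gives
\[
  \dim_\QQ H^*(X^T;\QQ)=\sum_{i=1}^m \dim_\QQ H^*(Z_i^{T/H};\QQ)\;\leq\;\sum_{i=1}^m \dim_\QQ H^*(Z_i;\QQ)=\dim_\QQ H^*(X^H;\QQ),
\]
and by the previous step the left- and right-hand sides are equal. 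Hence equality holds in every summand, so $\dim_\QQ H^*(Z_i^{T/H};\QQ)=\dim_\QQ H^*(Z_i;\QQ)\geq 1$ since $Z_i\neq\emptyset$; in particular $Z_i^{T/H}\neq\emptyset$, i.e.\ $Z_i$ contains a $T$-fixed point. (As a byproduct each $Z_i$ is itself equivariantly formal over $\QQ$.)

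There is no serious obstacle here beyond correctly invoking the localization-theoretic input and its equality case. The one point that deserves a line of care is conceptual rather than technical: passing from $X$ to $X^H$ replaces the acting torus $T$ by $T/H$ \emph{without} changing the fixed-point set, and only once this is noticed can the equivariant-formality characterization be reapplied to $X^H$ and to each of its components. After that the proof is purely a matter of chaining inequalities.
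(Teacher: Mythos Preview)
Your argument is correct and is precisely the dimension-counting argument via Borel localization that the paper points to (the paper omits the proof and refers to \cite[Corollary 3.6.19]{AlldayPuppe} and \cite[Corollary 9.9]{eqcohomsurvey}). The chain of inequalities $\dim_\QQ H^*(X^T;\QQ)\le \dim_\QQ H^*(X^H;\QQ)\le \dim_\QQ H^*(X;\QQ)$ collapsing by equivariant formality, followed by the componentwise refinement, is exactly the intended proof.
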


Finally we would like to discuss equivariant vector bundles and equivariant
characteristic classes.

\begin{defn}\label{def:equivariant bundle and classes}
	Let $ \pi \colon V \to X$ be a vector bundle (either over $\RR$ or $\CC$). We
	say $\pi$ is a \emph{$G$-equivariant vector bundle} if there is a left-action of
	$G$ on $V$ where $\pi$ is $G$-equivariant and such that for all $g \in G$
	and $x \in X$ the map $\pi^{-1}(\{x\})=:V_x \to V_{g \cdot x}$, $v \mapsto g
		\cdot v$ is a linear.
\end{defn}

Every $G$-equivariant vector bundle $\pi \colon E \to X$ defines through the
Borel construction a vector bundle over its homotopy quotient:
\[
	\pi_G \colon V_G = EG \times_G V \to EG \times_G X = X_G, \quad
	[e,v] \mapsto [e, \pi(v)].
\]
\begin{defn}
	\label{def:equivariant char class}
	If $c$ denotes a characteristic class in a cohomology group with coefficients
	$\Lambda$ then we define the \emph{equivariant characteristic class} $c^G(V)$ to
	be  the characteristic class $c(V_G)$ of the vector bundle $V_G \to X_G$, thus
	$c^G(V)$ is an element of $H^\ast(X_G;\Lambda) = H_G^\ast(X;\Lambda)$.
\end{defn}

\begin{ex}
	\label{ex:equivariant vb and classes}
	Let $G$ be a Lie group acting smoothly from the left on a smooth manifold $M$.
	For $g \in G$ denote by $L_g \colon M \to M$ the corresponding diffeomorphism
	induced by the left action of $g$. $G$ acts on $TM$ by the differential of $L_g$, i.e.
	$g \cdot v := D(L_g)(v)  \in TM_{g \cdot p}$ for $v \in TM_p$ ($p \in M$). This
	action makes $\pi \colon TM \to M$ to a $G$-equivariant vector bundle.
\end{ex}

\begin{ex}
	\label{ex:equivarian char classes of trivial vb}
	Consider a complex linear representation of $T = (S^1)^k$ on $\CC^n$. First recall the
	decomposition of $\CC^n$ into complex $1$-dimensional irreducible
	subrepresentations, i.e. $\CC^n = \oplus_{\alpha} V_\alpha$, where $\alpha \in
		\mathrm{Hom}(T,S^1) \cong \ZZ^k$. We consider this representation as a
	$T$-equivariant complex vector bundle $V \to \ast$ over a point. The associated
	vector bundle of the corresponding homotopy quotients is given by
	\[
		ET \times_{T} \CC^n \longrightarrow ET \times_{T} \ast = BT =
		(\CC\PP^{\infty})^k
	\]
	Furthermore we have $ET \times_{T} \CC^n = \oplus_{\alpha} L_\alpha$
	where $L_\alpha = ET \times_{T} V_{\alpha}$. Since $c_1(L_{\alpha}) =
		\alpha$, where we identify $\mathrm{Hom}(T,S^1)$ as $H^2(BT;\mathbb{Z})$ as
	in Remark \ref{rem:characters and cohomology of BT}, we obtain for the total
	equivariant Chern class of $V \to \ast$
	\[
		c^T(V) = \prod_{\alpha}(1+\alpha) \in H_T^\ast(\ast) = H^\ast(BT;\mathbb{Z}).
	\]

\end{ex}

\subsection{GKM manifolds} \label{sec:GKMmanifolds}

\begin{defn}\label{def:gkm manifold}
	An effective action of a torus $T$ on a compact, connected, orientable smooth manifold $M$ is called an \emph{integer GKM action} (resp.\ a \emph{rational GKM action}), if
	\begin{enumerate}
		\item it is equivariantly formal over $\mathbb{Z}$ (resp.\ over $\mathbb{Q}$),
		\item the fixed point set $M^T$ is finite, and
		\item for all $p\in M^T$ the weights of the irreducible factors of the isotropy representation at $p$ are pairwise linearly independent.
	\end{enumerate}
	A manifold with such an action will also be called an \emph{integer (resp. rational) GKM manifold}.
\end{defn}
Recall from Proposition \ref{prop:ef} that the condition of equivariant
formality can be replaced by the vanishing of the odd-degree integral (resp.\ rational) cohomology.

\begin{defn}
	Given a GKM action, its associated labelled graph as constructed in Section \ref{sec:associatinggraph} is called the \emph{GKM graph of the action}.
\end{defn}

We complete the discussion from Section \ref{sec:associatinggraph} and in particular Remark \ref{rem:one_skeleton}. First of all, we note that for a GKM manifold the one-skeleton $M_1$ is exactly the space encoded by the GKM graph. To see this let $x\in M_1$. Then $x\in M^H$ for some codimension $1$ subtorus of $H$ and by Proposition \ref{prop:efhasfixedpoints} it follows that the component $N$ of $x$ in $M^H$ contains a fixed point $p$. But then $T_p N$ is a $T$-subrepresentation of $T_p M$ such that all stabilizers in $T_p N$ are of codimension $\leq 1$. It follows that $T_p N$ has to be one of the irreducible subrepresentations and hence $N$ is either an isolated fixed point or one of the invariant two-spheres.
 
Now equivariant formality implies that $H_T^*(M;\Lambda)$ is embedded as a $H^*(BT;\Lambda)$-subalgebra of \[H_T^*(M^T;\Lambda)=\bigoplus_{p\in M^T} H^*(BT;\Lambda)\] and by Theorem \ref{thm:csl over Q} the image is described completely by $M_1$ and hence by the GKM graph (where over $\mathbb{Z}$ we have to impose mild additional assumptions on the stabilizers). This is what is called the \emph{GKM description of equivariant cohomology}. Proposition \ref{prop:ef} tells us how to reconstruct the non-equivariant cohomology from this description.

While this covers the theory of how and why the GKM description works, the (equivariant) cohomology algebras and characteristic classes can be computed via nice combinatorial formulas in terms of the weights as described by the following proposition. To make sense of these descriptions recall how we have identified $\mathbb{Z}_t^*\cong \Hom (T,S^1)\cong H^2(BT;\mathbb{Z})$ as in Remark \ref{rem:characters and cohomology of BT}.

\begin{prop}\label{prop:GKMdescription}
Let $M$ be a GKM $T$-manifold over the coefficient ring $\Lambda=\mathbb{Z},\mathbb{Q}$. In case $\Lambda = \ZZ$ assume additionally that for all $p\notin M_1$, the isotropy group $T_p$ is contained in a proper subtorus of $T$. Then the inclusion $M^T\rightarrow M$ induces an isomorphism of $H^*_T(M;\Lambda)$ onto the subalgebra
 \[
			\left\{\left.(f(p))_p\in \bigoplus_{p\in M^T} H^*(BT;\Lambda) \,\,\,\right|  \,\, \begin{matrix} f(p) - f(q) \equiv 0 \text{ mod }\alpha \quad\text{if $p$ and $q$ lie in}\\ \text{a common invariant $2$-sphere with weight $\alpha$}  \end{matrix} \right\}
			      \]
\end{prop}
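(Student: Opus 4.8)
The plan is to combine the Chang--Skjelbred lemma (Theorem~\ref{thm:csl over Q}) with the explicit computation of the equivariant cohomology of an invariant two-sphere from Example~\ref{ex:S2}. By Theorem~\ref{thm:csl over Q}, under our hypotheses the image of the restriction map $H^*_T(M;\Lambda)\rightarrow H^*_T(M^T;\Lambda)$ coincides with the image of $H^*_T(M_1;\Lambda)\rightarrow H^*_T(M^T;\Lambda)$, and since the first map is injective (Borel localization), it suffices to identify the latter image with the subalgebra described in the statement. We already know from the discussion preceding the proposition that for a GKM manifold $M_1$ is precisely the union of the fixed points and the finitely many invariant two-spheres $S_\alpha$; so the problem is purely about the cohomology of this graph of spheres.

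First I would set up the \v{C}ech/Mayer--Vietoris computation of $H^*_T(M_1;\Lambda)$ exactly as in Example~\ref{ex:S2}, but for the whole one-skeleton at once: cover $M_1$ by an open set $V$ that equivariantly deformation retracts onto $M^T$ (a disjoint union of small invariant balls around the fixed points) and an open set $U$ that deformation retracts onto the disjoint union of the ``equators'' of the invariant two-spheres. Then $U\cap V$ retracts onto two circles near the two poles of each sphere, $H^*_T(V;\Lambda)=\bigoplus_{p\in M^T}H^*(BT;\Lambda)$, and $H^*_T(U;\Lambda)=\bigoplus_{\text{edges }e}H^*(BT;\Lambda)/(\alpha_e)$ by Example~\ref{ex:S1}. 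As in the sphere case, all maps are $H^*(BT;\Lambda)$-algebra maps, so the Mayer--Vietoris sequence collapses: the map $H^*_T(U;\Lambda)\oplus H^*_T(V;\Lambda)\rightarrow H^*_T(U\cap V;\Lambda)$ is surjective (each edge-factor of $U$ maps diagonally onto the two circle-factors, which are also hit by the two adjacent vertex-factors of $V$), whence $H^*_T(M_1;\Lambda)$ is the kernel of $H^*_T(V;\Lambda)\rightarrow H^*_T(U\cap V;\Lambda)$. Chasing through the identifications, a tuple $(f(p))_p$ lies in this kernel exactly when, for every edge $e$ from $p$ to $q$ with label $\alpha=\alpha_e$, one has $f(p)\equiv f(q)\bmod\alpha$ -- which is the asserted subalgebra. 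It remains only to observe that the composite $H^*_T(M_1;\Lambda)\hookrightarrow H^*_T(V;\Lambda)=H^*_T(M^T;\Lambda)$ is precisely the restriction along $M^T\hookrightarrow M_1$, so its image is the claimed subalgebra.

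The one genuinely delicate point -- and the step I expect to be the main obstacle -- is the surjectivity of the map in the Mayer--Vietoris sequence over $\Lambda=\mathbb{Z}$, equivalently the exactness needed to conclude that $H^*_T(M_1;\Lambda)$ injects into $H^*_T(M^T;\Lambda)$ with the stated image rather than merely surjecting onto it. Over $\mathbb{Q}$ this is immediate from the collapse of the sequence; over $\mathbb{Z}$ one must check that the quotient maps $H^*(BT;\mathbb{Z})\rightarrow H^*(BT;\mathbb{Z})/(\alpha)$ are surjective in each degree (true, since $\alpha\in H^2$) and that no $\mathrm{Tor}$ obstruction enters -- this is exactly where the hypothesis that isotropy groups outside $M_1$ lie in proper subtori is used, via Theorem~\ref{thm:csl over Q}, to guarantee that $H^*_T(M_1;\Lambda)\rightarrow H^*_T(M^T;\Lambda)$ already has the right image without having to analyze $M\setminus M_1$ directly. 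I would therefore phrase the argument so that the Mayer--Vietoris computation only ever asserts that $H^*_T(M_1;\Lambda)$ is the kernel described above, and invoke Theorem~\ref{thm:csl over Q} for the identification with $\mathrm{im}(H^*_T(M;\Lambda)\rightarrow H^*_T(M^T;\Lambda))$, so that all the integral subtleties are quarantined inside the already-cited Franz--Puppe result.

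Finally, I would remark that the isomorphism $H^*_T(M;\Lambda)\cong\{(f(p))_p\mid\ldots\}$ is one of $H^*(BT;\Lambda)$-algebras, since every map in sight is induced by an equivariant map and hence is an $H^*(BT;\Lambda)$-algebra homomorphism (Remark~\ref{rem:algebrastructure}); in particular the subalgebra structure on the right-hand side is the obvious one it inherits from the product $\bigoplus_{p}H^*(BT;\Lambda)$, and the identification $\mathbb{Z}_{\mft}^*\cong H^2(BT;\mathbb{Z})$ of Remark~\ref{rem:characters and cohomology of BT} is what makes the congruence ``mod $\alpha$'' meaningful. This completes the proof.
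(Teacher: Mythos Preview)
Your approach is essentially identical to the paper's: reduce via Chang--Skjelbred to computing the image of $H^*_T(M_1;\Lambda)\to H^*_T(M^T;\Lambda)$, then run the Mayer--Vietoris argument of Example~\ref{ex:S2} globally on the graph of spheres, using the diagonal from $H^*_T(U)$ to simplify to a short exact sequence whose middle term is $\bigoplus_p H^*(BT;\Lambda)$ and whose right-hand map is the edge-wise difference into $\bigoplus_e H^*(BT;\Lambda)/(\alpha_e)$.

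Two small points. First, the sentence ``$H^*_T(M_1;\Lambda)$ is the kernel of $H^*_T(V;\Lambda)\to H^*_T(U\cap V;\Lambda)$'' is not literally correct---that kernel would force $f(p)\equiv 0$ rather than $f(p)\equiv f(q)$; what you mean (and what your next sentence computes) is the kernel of $H^*_T(V)\to H^*_T(U\cap V)/\mathrm{im}\,H^*_T(U)$, exactly the ``dividing by the diagonal'' step from Example~\ref{ex:S2}. Second, your third paragraph is overcautious: the Mayer--Vietoris computation works identically over $\mathbb{Z}$ and $\mathbb{Q}$ since the relevant surjectivity is just the componentwise quotient $H^*(BT)\twoheadrightarrow H^*(BT)/(\alpha)$; the extra integral hypothesis on isotropies is used \emph{only} inside Theorem~\ref{thm:csl over Q}, as you eventually say.
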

\begin{proof}
We prove the statement for $\Lambda=\mathbb{Z}$, the other case is analogous. By Theorem \ref{thm:csl over Q} it remains only to show that the above subalgebra is the image of the map $H_T^*(M_1;\Lambda)\rightarrow H_T^*(M^T;\Lambda)$ induced by the inclusion. To see this we consider the equivariant Mayer-Vietoris sequence of the graph $M_1$ obtained by covering it with $U=M_1\backslash M^T$ and $V$ a collection of small equivariant neighbourhoods around $M^T$ which deformation retract onto $M^T$. Then in the same way as in Example \ref{ex:S2} the Mayer-Vietoris sequence simplifies to
\[0\rightarrow H_T^*(M;\mathbb{Z})\rightarrow \bigoplus_{p\in M^T} H^*(BT;\mathbb{Z})\rightarrow \bigoplus_{S\subset M_1} H(BT;\mathbb{Z})/(\alpha(S))\]
where the map to the component $H^*(BT;\mathbb{Z})/(\alpha(S))$ component of some invariant $2$-sphere $S$ is given by forming the difference of the $H^*(BT;\mathbb{Z})$ components in the middle term which belong to the two fixed points of $S$. Hence the kernel of the right hand map is precisely the subalgebra described in the proposition.
\end{proof}

The description of the equivariant cohomology of a GKM action in terms of its
GKM graph as in Proposition \ref{prop:GKMdescription} suggests the following definition:
\begin{defn}[\cite{GuilleminZaraII}]
	For an abstract graph $\Gamma$ with edge set $E(\Gamma)$ and a function $\alpha\colon E(\Gamma)\rightarrow H^2(BT;\Lambda)$ the \emph{equivariant graph cohomology} is defined as
	\[
		H^*_T(\Gamma,\alpha;\Lambda) = \left\{\left.(f(u))_u\in \bigoplus_{u\in V(\Gamma)} H^*(BT;\Lambda) \,\,\,\right|  \,\, \begin{matrix} f(v) - f(w) \equiv 0 \text{ mod }\alpha(e) \\ \text{ for every edge $e$ between $v$ and $w$} \end{matrix} \right\}
	\]
	and its \emph{graph cohomology} as
	\[
		H^*(\Gamma,\alpha;\Lambda):=H^*_T(\Gamma,\alpha;\Lambda)/(H^{>0}(BT;\Lambda)\cdot H^*_T(\Gamma,\alpha;\Lambda)).
	\]
\end{defn}

\begin{rem}
In the situation of Proposition \ref{prop:GKMdescription} for the GKM graph $(\Gamma,\alpha)$ of $M$ one thus has
\begin{equation}\label{eq:cohomiso}
	H^*_T(M;\Lambda) \cong H^*_T(\Gamma,\alpha;\Lambda)\qquad \textrm{and}\qquad H^*(M;\Lambda) \cong H^*(\Gamma,\alpha;\Lambda)
\end{equation}
as $H^*(BT;\Lambda)$-, respectively $\Lambda$-algebras, where for the second isomorphism we use the last part of Proposition \ref{prop:ef}.
\end{rem}

Another nice feature of the GKM description is that one obtains a rather direct description of the (equivariant) characteristic classes. We sum this up in the following two propositions.

\begin{prop}
	\label{prop:equivariant chern classes for gkm}
	Let $T$ act on a compact, connected, orientable smooth
	manifold $M$ with finite fixed point set $M^T = \{p_1,\ldots,p_k\}$ and
	let $V \to M$ be a complex $T$-equivariant vector bundle.

	For
	$i=1,\ldots,k$, let $\alpha_{ij}$, $j=1,\ldots,m$, be the weights of the
	isotropy representation of $T$ at $V_{p_i}$, listed with multiplicities.
	Under the map
	\[
		H^*_T(M;\mathbb{Z})\longrightarrow H^*_T(M^T;\ZZ)
	\]
	which is induced by the inclusion, the equivariant total Chern class is given
	by
	\[
		\sum_{i=1}^{k} \prod_{j=1}^{m} (1+\alpha_{ij}) \in
		\bigoplus_{i=1}^{k} H^\ast(BT;\mathbb{Z}) \cong H^\ast_T(M^T;\mathbb{Z}).
	\]

\end{prop}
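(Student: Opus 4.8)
The plan is to reduce the statement to the case of a bundle over a point, using the naturality of equivariant characteristic classes together with the restriction map to the fixed point set. First I would observe that the map $H^*_T(M;\mathbb{Z})\to H^*_T(M^T;\mathbb{Z})$ is induced by the inclusion $\iota\colon M^T\hookrightarrow M$, which decomposes as a disjoint union of the inclusions $\iota_i\colon \{p_i\}\hookrightarrow M$. By the functoriality established in Remark \ref{rem:algebrastructure} and the compatibility of characteristic classes with pullback (Definition \ref{def:equivariant char class}, applied to the induced map on Borel constructions), the image of $c^T(V)$ under $\iota_i^*$ is the equivariant total Chern class of the pulled-back bundle $\iota_i^*V = V_{p_i}$, viewed as a $T$-equivariant bundle over the point $\{p_i\}$.

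The second step is then purely a computation over a point, which is exactly Example \ref{ex:equivarian char classes of trivial vb}: a complex $T$-representation $V_{p_i}$ decomposes as $\bigoplus_j (V_{p_i})_{\alpha_{ij}}$ into weight spaces, the associated bundle over $BT$ splits as $\bigoplus_j L_{\alpha_{ij}}$, and since $c_1(L_{\alpha_{ij}}) = \alpha_{ij}$ under the identification $\Hom(T,S^1)\cong H^2(BT;\mathbb{Z})$ of Remark \ref{rem:characters and cohomology of BT}, the Whitney sum formula gives $c^T(V_{p_i}) = \prod_{j=1}^m (1+\alpha_{ij})$. Assembling these over all $i=1,\ldots,k$ under the identification $H^*_T(M^T;\mathbb{Z}) \cong \bigoplus_{i=1}^k H^*(BT;\mathbb{Z})$ yields the claimed formula $\sum_{i=1}^k \prod_{j=1}^m (1+\alpha_{ij})$.

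The main obstacle — really the only point requiring care — is justifying that pulling back a $T$-equivariant bundle along a $T$-equivariant map and then applying the Borel construction agrees with first applying the Borel construction and then pulling back along the induced map of homotopy quotients; that is, that $(\iota_i^* V)_G \cong \iota_{i,G}^* (V_G)$ as bundles over $\{p_i\}_G = BT$. This is a routine check from the definition $V_G = EG\times_G V$ and the fact that the Borel construction is functorial, but it is what makes the equivariant characteristic classes behave naturally. Once this is in place, no further input is needed: the proposition is simply the combination of naturality with the point computation.
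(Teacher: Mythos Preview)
Your proof is correct and follows exactly the same approach as the paper's own proof, which simply invokes Example \ref{ex:equivarian char classes of trivial vb} together with the naturality of characteristic classes. Your write-up is a more detailed expansion of that two-line argument, including the explicit verification that the Borel construction commutes with equivariant pullback.
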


\begin{proof}
	$V$ restricted over a fixed point $p_i \in M^T$ is a $T$-equivariant complex
	vector bundle over a point. The proposition follows now from Example
	\ref{ex:equivarian char classes of trivial vb} and the naturality of characteristic classes.
\end{proof}

\begin{prop}\label{prop:char classes and everything else} 
	Under the assumptions of \ref{prop:equivariant chern classes for
		gkm} let $\alpha_{ij}$ denote the isotropy representation of $T$ in the
	corresponding tangent spaces of $M$. Then we have:
	\begin{enumerate}
		\item The image of the total equivariant Pontrjagin class of the action is
		      \[
			      \sum_{i=1}^k \prod_{j=1}^m (1+\alpha_{ij}^2)\in \bigoplus_{i=1}^k H^*(BT;\ZZ)\cong H^*_T(M^T;\ZZ).
		      \]
		\item The image of the total equivariant Stiefel-Whitney class of the action is
		      \[
			      \sum_{i=1}^k \prod_{j=1}^m (1\pm \alpha_{ij})\in \bigoplus_{i=1}^k H^*(BT;\ZZ_2)\cong H^*_T(M^T;\ZZ_2).
		      \]

	\end{enumerate}
\end{prop}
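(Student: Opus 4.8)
The plan is to deduce both formulas from the complex case already established in Proposition \ref{prop:equivariant chern classes for gkm}, together with the standard relations between Chern classes on the one hand and Pontrjagin, respectively Stiefel--Whitney, classes on the other. The key observation is that the isotropy representation on $T_{p_i}M$ is, by the assumptions of Proposition \ref{prop:equivariant chern classes for gkm} (finite fixed point set, so no trivial summand), a real $T$-representation which becomes, after choosing an invariant complex structure, a complex representation with weights $\alpha_{i1},\ldots,\alpha_{im}$ (where $2m = \dim M$). Thus $TM$, viewed as an equivariant real bundle, is the realification of an equivariant complex bundle $E$ whose equivariant Chern class restricts over $p_i$ to $\prod_{j=1}^m(1+\alpha_{ij})$.

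For part (i): recall that for any real bundle which is the realification of a complex bundle $E$ with Chern roots $x_j$, the total Pontrjagin class is $p(E_\RR) = \prod_j (1+x_j^2)$ — this is the usual definition $p_k = (-1)^k c_{2k}(E_\RR\otimes\CC)$ combined with $E_\RR\otimes\CC \cong E\oplus\bar E$, so the Chern roots of the complexification are $\pm x_j$. Applying this with equivariant Chern classes and restricting over $p_i$, where the Chern roots of $E$ are exactly the weights $\alpha_{ij}$, gives $\prod_{j=1}^m(1+\alpha_{ij}^2)$ over the $i$-th summand, and summing over $i$ yields the claimed formula. One subtlety to address is that the equivariant Pontrjagin class of $M$ is defined via the tangent bundle as a real bundle, independently of any complex structure; since the total Pontrjagin class only depends on the real bundle and we have exhibited $TM$ as a realification equivariantly, the identification is legitimate, and the choice of invariant complex structure (which only affects the signs of the weights, hence not the squares) does not matter.

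For part (ii): over $\ZZ_2$, the total Stiefel--Whitney class of the realification of a complex bundle $E$ is the mod $2$ reduction of the total Chern class, $w(E_\RR) \equiv c(E) \bmod 2$, because $w_{2j}(E_\RR) = c_j(E) \bmod 2$ and $w_{\mathrm{odd}}(E_\RR) = 0$ for a complex bundle. Applying this equivariantly and restricting over $p_i$ gives $\prod_{j=1}^m (1+\alpha_{ij})$ with $\alpha_{ij}$ now understood as the mod $2$ reduction of the weight in $H^2(BT;\ZZ_2)$; since mod $2$ the sign is irrelevant, $\alpha_{ij} = \pm\alpha_{ij}$ in $\ZZ_2$-coefficients, which is why the statement writes $1\pm\alpha_{ij}$. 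Summing over the fixed points completes the proof.

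The main obstacle — really the only point requiring care rather than routine bookkeeping — is justifying that it suffices to work with one fixed invariant complex structure on $TM$: one must note that $w$ and $p$ are a priori invariants of the underlying real equivariant bundle, so the formulas cannot depend on the auxiliary choice, and indeed the right-hand sides are manifestly insensitive to the sign ambiguity in the weights $\alpha_{ij}\in\ZZ_\mft^*/\pm 1$ precisely because they involve only $\alpha_{ij}^2$ (for $p$) or reductions mod $2$ (for $w$). Everything else is the naturality of characteristic classes under restriction to the fixed points, exactly as in the proof of Proposition \ref{prop:equivariant chern classes for gkm}.
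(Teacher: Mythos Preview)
Your argument has a gap: you assert that $TM$ is globally the realification of an equivariant complex bundle $E$, but this is false in general. Take $M=S^4$ with the standard $T^2$-action (Example~\ref{ex:s2n}): $S^4$ admits no almost complex structure at all, so no such global $E$ exists. (This is also reflected combinatorially in Example~\ref{ex:spheres}: the GKM graph of $S^4$ carries no signed structure.) What is true is only that the \emph{restriction} of $TM$ to each fixed point $p_i$ admits an invariant complex structure, since the isotropy representation has no trivial summand; but you cannot patch these into a global bundle $E$ whose equivariant Chern class you then restrict.

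The fix is simple and is exactly what the paper does. Rather than seeking a global complex structure on $TM$, complexify: $TM\otimes_\RR\CC$ is always a global equivariant complex bundle, and at $p_i$ its weights are $\pm\alpha_{ij}$ (each real weight-$\pm\alpha_{ij}$ plane complexifies to complex lines of weights $\alpha_{ij}$ and $-\alpha_{ij}$). Proposition~\ref{prop:equivariant chern classes for gkm} then applies directly to $TM\otimes\CC$, giving $\prod_j(1+\alpha_{ij})(1-\alpha_{ij})=\prod_j(1-\alpha_{ij}^2)$ over $p_i$, and the sign convention $p_k=(-1)^kc_{2k}$ yields $\prod_j(1+\alpha_{ij}^2)$. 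For (ii) one argues analogously to Proposition~\ref{prop:equivariant chern classes for gkm} directly with $\ZZ_2$-coefficients (each irreducible real $2$-plane bundle over $BT$ has $w=1+\alpha_{ij}$ mod $2$), or one works purely at the fixed points where your realification argument \emph{is} valid. Your formulas and your discussion of sign-insensitivity are correct; only the global $E$ is fictitious.
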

\begin{proof}
	The first assertion follows from Proposition \ref{prop:equivariant chern
		classes for gkm} and the fact, that Pontryagin classes are defined as the Chern
	classes of the complexified tangent bundle. A similar argument to that of
	Proposition \ref{prop:equivariant chern classes for gkm} applies to the
	Stiefel-Whitney classes.
\end{proof}

\begin{rem}
	With the term \emph{GKM theory} one usually denotes results related to the
	correspondence between GKM actions and GKM graphs, such as
	Proposition \ref{prop:char classes and everything else} and its consequences. Many different variants of the GKM theory described in this survey have been proposed, for example for a nondiscrete fixed point set, actions without fixed points, or actions of general compact connected Lie groups; see the end \cite[Section 11]{eqcohomsurvey} for a detailed list of references.
\end{rem}

\subsection{The abstract notion of a GKM graph and geometric structures}\label{sec:geometricstuff}

We have seen in Section \ref{sec:associatinggraph} how a torus action on a manifold can give rise to a labelled unoriented graph $(\Gamma,\alpha)$ with labels in $\mathbb{Z}_\mathfrak{t}^*/{\pm 1}\cong\mathbb{Z}^k/{\pm 1}$. When studying the interplay between combinatorics and geometry one is naturally confronted with the problem of finding the ``right'' combinatorial object. In this section we will study the rich interplay between varying degrees of structure -- from basic topology to equivariant Kähler structures -- and the combinatorics of the arising graphs. In particular we will fix terminology for the abstract combinatoric objects meant to describe the geometric situations.

For a graph $\Gamma$ we denote by $V(\Gamma)$ its set of vertices and by $E(\Gamma)$ its set of unoriented edges. Occasionally it will be convenient to consider the set $\widetilde{E}(\Gamma)$ of oriented edges containing two oriented edges for every unoriented edge in $E(\Gamma)$. For $e\in \widetilde{E}(\Gamma)$ we denote by $\overline{e}$ the same edge with the opposite orientation. There are two functions $i,t\colon \widetilde{E}(\Gamma)\rightarrow V(\Gamma)$ associating to each edge $e$ its initial vertex $i(e)$ and its terminal vertex $t(e)$. For some $v\in V(\Gamma)$ we set $E_v$ and $\widetilde{E}_v$ to be the unoriented and oriented edges emanating from $v$ (with orientation away from $v$).

\paragraph{Topology of the graph} As a first observations we note that the graph associated to a $2n$-dimensional GKM manifold is $n$-regular, i.e.\ every vertex is connected to exactly $n$ edges. To see this recall that for a fixed point $p$ the $2$-dimensional tangent representation $T_pM$ decomposes into $2$-dimensional subrepresentations and each of these corresponds to one edge leaving the vertex corresponding to $p$. Furthermore as any invariant $2$-sphere contains two distinct fixed points, the graph contains no single edge loops. Multiple edges between vertices do occur. As a less elementary observation we point out that the graph is always connected: indeed it follows directly from the Chang-Skjelbred lemma, Theorem \ref{thm:csl over Q},  that $\dim H^0(M_1,\mathbb{Q})= \dim H^0(M;\mathbb{Q})$. Hence $M_1$ is connected and consequently the same holds for the associated GKM graph $(\Gamma,\alpha)$.

\paragraph{Smoothness.} While the above conditions on $\Gamma$ follow from
purely topological arguments, the smoothness of the action results in a first
nontrivial condition on the labels. In fact the labels on the edges emanating
from two neighbouring vertices are closely related. We will very briefly sketch
the reasoning behind this: let $p,q\in M^T$ be two fixed points connected by a
$T$-invariant two-sphere $S$ and let $H\subset T$ denote the codimension $1$
subtorus contained in the kernel of the $T$-action on $S$.
Both $T_p M$ and $T_q M$ appear in the restriction of $TM$ to the trivial
$H$-space $S$. As the $H$-weights of the fibers of this
bundle vary continuously and are hence constant it follows  that $T_p M$ and
$T_q M$ are isomorphic as $H$-representations. This means that the weights at
$p$, which are encoded in the labels of the edges emanating from the
corresponding vertex, correspond bijectively -- albeit not
canonically -- to the corresponding weights at $q$ when restricting to the
subgroup $H$. Combinatorially such a bijection is captured
in the notion of a \emph{connection} in the definition below. It was first
considered in \cite{GuilleminZaraI} and \cite{GuilleminZaraII}. We picture it as
a method to slide edges along edges.

\begin{figure}[h]
	\centering
	\includegraphics{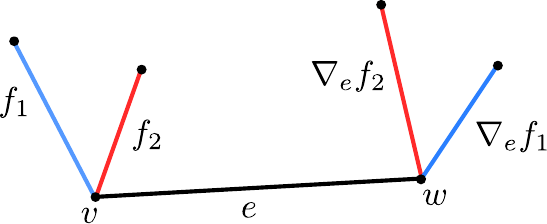}
	\caption{A connection}
	\label{fig: connection}
\end{figure}

\begin{defn}\label{defn:abstractgkmgraph}
	Let $\Gamma$ be a finite, connected, $n$-regular, loop-free graph together with a function $\alpha\colon E(\Gamma)\rightarrow \mathbb{Z}^k/{\pm 1}$, called an \emph{axial function}. A \emph{compatible connection} on $(\Gamma,\alpha)$ is given by a bijection $\nabla_e: \widetilde{E}_{i(e)}\rightarrow \widetilde E_{t(e)}$ for every oriented edge $e\in \widetilde{E}(\Gamma)$ such that
	\begin{enumerate}
		\item $\nabla_e(e)=\overline{e}$.
		\item $\nabla_{\overline{e}}=\nabla_e^{-1}$.
		\item $\alpha(\nabla_e(f))\equiv \alpha(f)\mod \alpha(e)$ in $\mathbb{Z}^k/{\pm 1}$ for all $f\in \widetilde{E}_{i(e)}$.
	\end{enumerate}
	The pair $(\Gamma,\alpha)$, consisting of the graph $\Gamma$ and the axial function $\alpha$, is called an \emph{abstract GKM graph} if it admits a compatible connection.
\end{defn}

We also occasionally like to use the term unsigned GKM graph in order to separate it from the original definition in \cite{GuilleminZaraI, GuilleminZaraII} which corresponds to what we will define as a signed GKM graph in the subsequent section on almost complex structures. Note that the congruence condition in the definition makes sense in $\ZZ^k/{\pm 1}$ although summation is not well defined. Equivalently, one may formulate it as follows: for any sign choices of $\alpha(\nabla_e(f))$, $\alpha(e)$, and $\alpha(f)$ there exists $c\in \ZZ$ and $\varepsilon\in\{\pm 1\}$ such that $\alpha(\nabla_e(f)) = \varepsilon \alpha(f) + c\alpha(e)$. The smoothness condition is related to the integrality of the linear combination; one may generalize GKM theory for actions on orbifolds (see \cite{GuilleminZaraII, GoertschesWiemelerNonNeg}), where rational constants occur.

The existence of a compatible connection on the GKM graph associated to a GKM manifold was shown in \cite[p.\ 6]{GuilleminZaraII} and \cite[Proposition 2.3]{GoertschesWiemelerNonNeg}. We thus obtain:
\begin{prop}\label{prop:abstract graph}
	The GKM graph of a GKM manifold is an abstract GKM graph.
\end{prop}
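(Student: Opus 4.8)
The plan is to verify the three axioms of Definition~\ref{defn:abstractgkmgraph} for the labelled graph $(\Gamma,\alpha)$ arising from a GKM manifold $M$, the only nontrivial input being the construction of a compatible connection. The structural properties of $\Gamma$ were already established in the ``Topology of the graph'' paragraph: $\Gamma$ is finite (finitely many fixed points, finitely many invariant two-spheres), connected (from the Chang--Skjelbred lemma, since $\dim H^0(M_1;\mathbb{Q})=\dim H^0(M;\mathbb{Q})$), $n$-regular (the isotropy representation $T_pM$ splits into $n$ weight spaces, each giving one edge), and loop-free (each invariant two-sphere has two distinct fixed points). So it remains to produce, for every oriented edge $e\in\widetilde E(\Gamma)$ corresponding to an invariant two-sphere $S$ joining fixed points $p=i(e)$ and $q=t(e)$, a bijection $\nabla_e\colon\widetilde E_p\to\widetilde E_q$ satisfying (i)--(iii).

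First I would set up the geometric source of the connection exactly as sketched in the ``Smoothness'' paragraph. Let $H\subset T$ be the codimension-one subtorus acting trivially on $S$. The restriction $TM|_S$ is a $T$-equivariant bundle over $S$; since $H$ acts trivially on $S$, the fiberwise $H$-weight decomposition is locally constant, hence globally constant on the connected $S$, so $T_pM\cong T_qM$ as $H$-representations. Now I would argue that this isomorphism can be chosen to respect the weight-space decompositions into irreducible $T$-summands: at $p$ the $n$ weights $\alpha_1^p,\dots,\alpha_n^p$ restrict to $n$ weights of $H$, and by the pairwise linear independence assumption \emph{at most} two of the $\alpha_i^p$ can restrict to the same $H$-weight (two weights have equal restriction to $H$ iff they are congruent mod $\alpha(e)$, and linear independence mod $\pm1$ prevents three such). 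The cleanest way to pin down the matching is to use the invariant two-spheres themselves: the summand $T_pS$ pairs with $T_qS$ (axiom (i): $\nabla_e(e)=\overline e$), and for each other edge $f\in\widetilde E_p$ the corresponding invariant two-sphere $S_f$ through $p$ can be ``slid along $S$'': concretely, $S_f\subset M^{\ker\lambda_f}$, and following $M^{H\cap\ker\lambda_f}$ (a four-dimensional invariant submanifold when $\alpha(f)\not\equiv 0\bmod\alpha(e)$, two-dimensional otherwise) from $p$ over $q$ one reads off a unique invariant two-sphere through $q$, which defines $\nabla_e(f)$. This gives a well-defined map, and applying the same construction to $\overline e$ produces its inverse (axiom (ii)).

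For axiom (iii), by construction $\nabla_e(f)$ is the edge at $q$ whose weight $\alpha(\nabla_e(f))$ has the same restriction to $H$ as $\alpha(f)$; since $H=\ker(\lambda\colon T\to S^1)$ with $\pm d\lambda=\alpha(e)$, two weights of $T$ agree on $H$ precisely when they differ by an integer multiple of $\alpha(e)$ in $\mathbb{Z}_\mathfrak{t}^*$ — up to sign — which is exactly the congruence $\alpha(\nabla_e(f))\equiv\alpha(f)\bmod\alpha(e)$ in $\mathbb{Z}^k/{\pm1}$. The integrality here is automatic because both sides live in the weight lattice $\mathbb{Z}_\mathfrak{t}^*$; this is where the smoothness of the action, rather than a mere orbifold structure, enters.

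The main obstacle is the \emph{well-definedness} of $\nabla_e(f)$, i.e.\ showing that the ``slide'' of an edge along $S$ lands on a genuine edge of $\Gamma$ and does so uniquely. When $\alpha(f)\not\equiv0\bmod\alpha(e)$ one considers the fixed submanifold $M^K$ for $K=(\ker\lambda)\cap(\ker\lambda_f)$, whose component $N$ through $p$ is a $4$-dimensional GKM manifold containing $S$; analyzing the two-spheres of $N$ through the two fixed points of $S$ gives the matching. When $\alpha(f)\equiv0\bmod\alpha(e)$, so that $S_f\subset M^H$ along with $S$, one works inside the surface-or-higher component of $M^H$. Handling these cases uniformly, and checking that the resulting assignment is a bijection (not merely a partial map), is the real content; everything else reduces to the representation-theoretic bookkeeping above. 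As indicated in the statement, this verification was carried out in \cite[p.~6]{GuilleminZaraII} and \cite[Proposition~2.3]{GoertschesWiemelerNonNeg}, and I would cite those for the details while presenting the outline above.
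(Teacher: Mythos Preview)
The paper gives no proof beyond citing \cite[p.~6]{GuilleminZaraII} and \cite[Proposition~2.3]{GoertschesWiemelerNonNeg}, having sketched the $H$-representation argument in the preceding ``Smoothness'' paragraph; your proposal follows exactly this route and cites the same references, so in that sense it matches the paper.

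That said, two claims in your expanded sketch are false and should be dropped. The assertion that pairwise linear independence forces ``at most two of the $\alpha_i^p$'' to share an $H$-weight is wrong: e.g.\ $(0,1),(1,1),(2,1)\in\mathbb{Z}^2$ are pairwise independent yet all congruent modulo $(1,0)$. For the same reason the component of $M^{H\cap\ker\lambda_f}$ through $p$ need not be four-dimensional --- a third weight may lie in the span of $\alpha(e)$ and $\alpha(f)$ --- so your ``sliding'' construction does not produce a well-defined $\nabla_e$ in general (it does under the stronger GKM$_3$ hypothesis). Fortunately the actual argument needs less: the $\ker\lambda_e$-representation isomorphism $T_pM\cong T_qM$ gives a multiset bijection of restricted weights, and \emph{any} lift to a bijection of $T$-weights sending $e\mapsto\overline e$ is a compatible connection, since Definition~\ref{defn:abstractgkmgraph} only asks for existence, not canonicity. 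Your deferral to the cited references is thus the correct move; just excise the two incorrect intermediate claims.
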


	As our definition of GKM manifold includes effectivity of the action, the GKM graph of a GKM manifold automatically satisfies the following condition:
	\begin{defn}
		An abstract GKM graph $(\Gamma,\alpha)$ is called \emph{effective} if for one and hence all $v\in V(\Gamma)$, the common kernel of the labels of all edges in $E(\Gamma)_v$ (considered as homomorphisms $T\to S^1$) is trivial.
	\end{defn}
	From now on we will assume that also all occurring abstract GKM graphs are effective.\\

Additional geometric structure on a GKM manifold $M$ leave its mark on the GKM graph. This is elaborated on in \cite[Section 2]{GKZ}.

\paragraph{Orientability.} In the literature GKM manifolds are usually assumed to be orientable. While this ensures that the one-skeleton is a graph of two-spheres and does not contain any copies of $\mathbb{R}P^2$, the orientability condition does place further restrictions on which labelled graphs can actually occur. Combinatorial notions of orientability are discussed in \cite[Section 7.9]{ToricTopology} in the special case of a $T^n$-action on a $2n$-dimensional manifold. A cohomologcal definition appeared in \cite{oddGKM}. Recently in \cite{GKZcorrespondence} we considered the following obstruction to the labels: given an abstract GKM graph $(\Gamma,\alpha)$, choose an arbitrary sign of the weight of every (unoriented) edge as well as a compatible connection. For an edge $e$ emanating from a vertex $v$ let $e_2,\ldots,e_n$ denote the other edges in $E_v$. Then through the fixed sign for $i=2,\ldots,n$ there are unique $\epsilon_i\in\{\pm 1\}$ satisfying the congruence
\[\alpha(\nabla_e(e_i))\equiv \epsilon_i\alpha(e_i)\mod \alpha(e)\]
in $\mathbb{Z}^k$. We set $\eta(e)=-\epsilon_2\cdot\ldots\cdot\epsilon_n$.
\begin{defn}
	We call $(\Gamma,\alpha)$ \emph{orientable} if there is a choice of signs for the labels and a compatible connection such that for every closed edge path $e_1,\ldots,e_m$ in $\Gamma$ one has \[\prod_{i=1}^m \eta(e_i)=1.\]
\end{defn}

In \cite{GKZcorrespondence} it is shown that the GKM graph of an (orientable) GKM manifold is indeed orientable in the above sense. Furthermore it is observed that for an orientable GKM graph any choice of signs and compatible connection will fulfil the condition in the definition.

\begin{ex} Let $\alpha,\beta,\gamma\in \mathbb{Z}^2$ be any linearly independent elements and consider the GKM graphs
	\begin{center}
		\begin{tikzpicture}
			\node (a) at (0,0)[circle,fill,inner sep=2pt] {};
			\node (b) at (3,0)[circle,fill,inner sep=2pt] {};
			\node (c) at (0,2)[circle,fill,inner sep=2pt] {};
			\node (d) at (3,2)[circle,fill,inner sep=2pt] {};

			\draw [very thick](a) to[in=160, out=20] (b);
			\draw [very thick](c) to[in=160, out=20] (d);
			\draw [very thick](a) to[in=-160, out=-20] (b);
			\draw [very thick](c) to[in=-160, out=-20] (d);
			\draw [very thick](a) to (c);
			\draw [very thick](b) to (d);

			\node at (1.5,-0.6) {$\alpha$};
			\node at (1.5,0.6) {$\beta$};
			\node at (1.5,1.4) {$\alpha$};
			\node at (1.5,2.6) {$\beta$};
			\node at (-0.4,1) {$\gamma$};
			\node at (3.4,1) {$\gamma$};

			\node (e) at (6,0)[circle,fill,inner sep=2pt] {};
			\node (f) at (9,0)[circle,fill,inner sep=2pt] {};
			\node (g) at (6,2)[circle,fill,inner sep=2pt] {};
			\node (h) at (9,2)[circle,fill,inner sep=2pt] {};

			\node at (7.5,-0.4) {$\alpha$};
			\node at (7.5,0.6) {$\beta$};
			\node at (7.5,1.4) {$\beta$};
			\node at (7.5,2.4) {$\alpha$};
			\node at (5.4,1) {$\gamma$};
			\node at (9.4,1) {$\gamma$};

			\draw [very thick](e) to (f);
			\draw [very thick](f) to (g);
			\draw [very thick](g) to (h);
			\draw [very thick](h) to (e);
			\draw [very thick](e) to (g);
			\draw [very thick](h) to (f);

		\end{tikzpicture}
	\end{center}
	which arise from each other by interchanging the endpoints of the two $\beta$-edges. There is a unique connection which preserves the labels and is thus in particular a compatible connection. We observe that with respect to this connection and given sign choices we have $\eta(e)=-1$ for any edge in the two graphs. Note that in the left hand graph the length of any closed edge path is even, while on the right hand side there are paths of odd length. Hence the left hand graph is orientable while the right hand one is not. Indeed the first graph is the GKM graph of a suitable action on $S^2\times S^4$, while the second graph encodes the one-skeleton of a suitable action on the non-orientable $(S^2\times S^2\times S^2)/\mathbb{Z}_2$ where $\mathbb{Z}_2$ acts by reflecting each $S^2$ factor at the equator.
\end{ex}

\paragraph{Almost complex structures.} Most prominently, one may consider actions on almost complex manifolds. If we are given a $T$-invariant almost complex structure on $M$, then the weights of the isotropy representations at the fixed points are well-defined elements in $H^2(BT;\ZZ)\cong \ZZ_\mft^*$. Thus we may associate to an oriented edge $e$, the weight (with unique sign) at the fixed point corresponding to the initial vertex $i(e)$. The arising combinatorial structure is given by the following
\begin{defn}
	A \emph{signed structure} on an abstract GKM graph $(\Gamma,\alpha)$ is a lift of the axial function to a map $\widetilde{E}(\Gamma)\rightarrow \mathbb{Z}^k$, by abuse of notation usually still denoted $\alpha$, such that
	\begin{enumerate}
		\item $\alpha(\overline{e})=-\alpha(e)$.
		\item there is a compatible connection fulfilling the signed congruence relations
		      \[ \alpha(\nabla_e(f))\equiv \alpha(f)\mod\alpha(e)\] in $\mathbb{Z}^k$ for all oriented edges $e\in\widetilde{E}(\Gamma)$ and $f\in \widetilde{E}_{i(e)}$.
	\end{enumerate}
	An abstract GKM graph together with a signed structure will also be referred to as a \emph{signed GKM graph}.
\end{defn}

The original definition of an abstract GKM graph given in \cite{GuilleminZaraI, GuilleminZaraII} was the signed version defined above. We note that an unsigned GKM graph may carry different signed structures, which on the geometric side corresponds to different invariant almost complex structures compatible with the same action.
We may regard the existence of a signed structure on the GKM graph of a GKM action as an obstruction to the existence of an invariant almost complex structure.

\begin{ex}\label{ex:spheres} It is a classical result by Borel and Serre \cite{BorelSerre} that the only spheres admitting any almost complex structure are $S^2$ and $S^6$; see also \cite{KonstantisParton}. This is nicely reflected on the level of GKM graphs. Any GKM action on $S^4$ has exactly two fixed points due to the Euler characteristic. Hence its GKM graph looks like the left hand graph below for some weights $\alpha,\beta$. The graph on the right depicts the GKM graph of the $T^2$-action on $S^6$ given by $(s,t)\cdot(v,w,z,h)=(s^{-1}v,st^{-1}w,tz,h)\in S^6\subset \mathbb{C}^3\oplus \mathbb{R}$. We have oriented the edges from left to right.

	\tikzset{middlearrow/.style={
				decoration={markings,
						mark= at position 0.5 with {\arrow{#1}} ,
					},
				postaction={decorate}
			}
	}

	\begin{center}
		\begin{tikzpicture}

			\node (a) at (0,0)[circle,fill,inner sep=2pt] {};
			\node (b) at (5,0)[circle,fill,inner sep=2pt]{};
			\node at (2.5,.85) {$\alpha$};
			\node at (2.5,-.95) {$\beta$};

			\draw [middlearrow={>}, very thick](a) to[in=160, out=20] (b);
			\draw [middlearrow={>}, very thick](a) to[in=200, out=-20] (b);

			\node (c) at (7,0)[circle,fill,inner sep=2pt] {};
			\node (d) at (12,0)[circle,fill,inner sep=2pt]{};
			\node at (9.5,1.1) {$-e_1$};
			\node at (9.5,0.3) {$e_1+e_2$};
			\node at (9.5,-1.1) {$-e_2$};

			\draw [middlearrow={>}, very thick](c) to[in=150, out=30] (d);
			\draw [middlearrow={>}, very thick](c) to[in=-150, out=-30] (d);
			\draw [middlearrow={>}, very thick](c) to (d);

		\end{tikzpicture}
	\end{center}
\end{ex}

One quickly checks that the connection where transport along any edge swaps the other two edges makes the right hand side a signed GKM graph. Indeed $S^6$ admits a $G_2$-invariant almost complex structure and the depicted graph is the GKM graph of the subaction of the maximal torus (cf.\ \cite[Example 1.9.1]{GuilleminZaraII}) . However on the graph associated to $S^4$ no signed structure can exist: let $e$ and $f$ denote the upper and lower edge oriented from left to right. Any connection has $\nabla_e(e)=\overline{e}$ and consequently also $\nabla_e(f)=\overline{f}$. However the signed congruence relations would imply
\[-\beta=\alpha(\nabla_e(f))\equiv \alpha(f)=\beta\mod \alpha\]
which cannot happen for linearly independent $\alpha$ and $\beta$. Although we know that the statement even holds non-equivariantly, this at least shows that there is no almost complex structure compatible with any GKM action on $S^4$.

\paragraph{Symplectic structures.} As a next step in the hierarchy we consider Hamiltonian $T$-actions.  Not every Hamiltonian action is GKM -- e.g.\ the fixed point set does not have to be discrete. However Hamiltonian actions are automatically equivariantly formal (this was shown by several authors \cite{Duflot, AtiyahBott, Ginzburg, Kirwan}, see \cite[Example 7.9]{eqcohomsurvey} for a sketch of the argument) and thus have a natural home in GKM theory. As sketched in Remark \ref{rem:Hamiltonian},  the image $\mu(M_1)$ of the one-skeleton is a linear presentation of the GKM graph, i.e.\ an edgewise affine linear embedding into $\mathfrak{t}^*$ such that the label of each edge gives its slope as in Examples \ref{ex:generaltoric}, \ref{ex:flag}, and \ref{ex:eschenburg}. An example of a signed GKM graph which does not admit such an embedding is given by the $T^2$-action on $S^6$ from Example \ref{ex:spheres}. However there are further obstructions to being the GKM graph of a Hamiltonian action: by the famous convexity theorem of Atiyah \cite{Atiyah} and Guillemin--Sternberg \cite{GSconvexity} the image of $\mu$ is a convex polytope given by the convex hull of the images of the fixed points. Furthermore it can be seen from the local normal form of the moment map that the orbits over the edges of this polytope are of dimension $\leq 1$ and hence belong to the linearly embedded GKM graph. We arrive at the following \emph{convexity criterion}: the GKM graph of a Hamiltonian GKM action has a linear presentation such that the edges of its convex hull must be given by edges of the graph.

\begin{rem}
	A definition similar to the GKM graph was given in \cite{Tolman}. Tolman defined
	the \emph{x-ray} of a Hamiltonian action of a torus $T$ on a compact symplectic
	manifold $M$. Consider an isotropy group $H = T_p$ and let $N$ be a connected
	component of the fixed point set $M^H$. As $N$ is itself a symplectic manifold
	equipped with a Hamiltonian action of a torus, its image under the moment map is
	a convex polytope contained in the convex polytope of the moment image of $M$.
	Moreover it is the convex hull of all $T$-fixed points that are contained in
	$N$. The x-ray is the collection of all these convex polytopes for every
	emerging isotropy group $H$ and every connected component of its fixed point
	set.

	If the Hamiltonian torus actions is also GKM, then the image of fixed points
	in the x-ray correspond to the vertices of the GKM graph and the
	one-dimensional polytopes correspond to a two-sphere in the one-skeleton, hence
	to the edges of the GKM graph. Moreover the slopes of the polytope represent a
	positive multiple of the corresponding weight in the isotropy representation,
	hence the x-ray determines the labels of the GMK graph up to a positive
	multiple.
\end{rem}

\begin{ex}
	Consider the GKM graphs with the linear presentations
	\begin{center}
		\begin{tikzpicture}
			\draw[step=1, dotted, gray] (-2.5,-2.5) grid (8.5,1.5);

			\draw[very thick] (0,0) -- ++(1,0) -- ++(0,-1) -- ++(-2,0) -- ++(0,2) -- ++(3,0) -- ++(0,-3) -- ++(-2,0) -- ++(0,2);

			\node at (0,0)[circle,fill,inner sep=2pt] {};
			\node at (1,0)[circle,fill,inner sep=2pt] {};
			\node at (1,-1)[circle,fill,inner sep=2pt] {};
			\node at (-1,-1)[circle,fill,inner sep=2pt] {};
			\node at (-1,1)[circle,fill,inner sep=2pt] {};
			\node at (2,1)[circle,fill,inner sep=2pt] {};
			\node at (2,-2)[circle,fill,inner sep=2pt] {};
			\node at (0,-2)[circle,fill,inner sep=2pt] {};

			\draw[very thick] (4,1) -- ++(1,0) -- ++(0,-2) -- ++(1,0) -- ++(0,2) -- ++(1,0) -- ++(0,-1) -- ++(-3,0) -- ++(0,1);
			\node at (4,1)[circle,fill, inner sep=2pt] {};
			\node at (5,1)[circle,fill, inner sep=2pt] {};
			\node at (5,-1)[circle,fill, inner sep=2pt] {};
			\node at (6,-1)[circle,fill, inner sep=2pt] {};
			\node at (6,1)[circle,fill, inner sep=2pt] {};
			\node at (7,1)[circle,fill, inner sep=2pt] {};
			\node at (7,0)[circle,fill, inner sep=2pt] {};
			\node at (4,0)[circle,fill, inner sep=2pt] {};
		\end{tikzpicture}
	\end{center}
	such that the labels are the primitive integral representatives of the slopes. Considering oriented edges with oriented slopes as labels gives rise to signed structures. Note that as unsigned graphs, both graphs are equal to an $8$-gon with labels alternating between $\pm e_1$ and $\pm e_2$, and are hence realized by the quasitoric manifold given by the equivariant connected sum of three copies of $S^2\times S^2$, where every circle factor of the acting $T^2$ rotates one of the $S^2$ factors at unit speed. See also Section \ref{sec:dim4} below. The above signed GKM graphs are realized by two invariant almost complex structures on this manifold which are not equivariantly isomorphic.

	The boundary of the convex hull of the above linear presentations contains an edge which is not in the graph and hence the embedding cannot come from a moment map of a Hamiltonian $T^2$-action. In fact one can show that no linear presentation of the underlying unsigned GKM graph can satisfy the convexity criterion and hence the action does not admit a compatible symplectic structure making it Hamiltonian.
\end{ex}

\paragraph{K\"ahler structures.}

The final geometric structure we want to discuss is that of an invariant Kähler structure, i.e.\ a Hamiltonian action with an invariant (integrable) complex structure compatible with the symplectic form. There are further combinatorial obstructions in order for a $T$-manifold $M$ to admit such a structure as shown by Tolman in \cite[Lemma 3.5]{Tolman}: for $p\in M^T$ and $V\subset T_p M$ a sum of irreducible subrepresentations whose weights form the minimal generating set of a cone in $\mathfrak{t}^*$ there is a Hamiltonian $T$-submanifold $N$ containing $p$ such that $T_pN=V$. In particular the moment map embeds the GKM graph of $N$ as a subgraph of the surrounding GKM graph of $M$ such that at the vertex representing $p$, the edges of the subgraph are precisely those belonging to $V$. Now the additional combinatorial obstruction comes from the fact that the embedded subgraph fulfils the convexity criterion as well.

\begin{ex}\label{ex:nonkahlerness}
	We illustrate this with Eschenburgs twisted flag and the standard flag from Examples \ref{ex:flag} and \ref{ex:eschenburg}.

	\begin{center}
		\begin{tikzpicture}
			\draw[step=1, dotted, gray] (-3.5,-4.5) grid (9.5,2.5);

			\draw[fill=cyan, opacity=0.4] (1,1) -- ++ (-2,0) -- ++ (2,-2) -- ++ (0,2);
			\draw[fill=cyan, opacity=0.4] (4,1) -- ++(0,-1) -- ++ (3,-3) -- ++ (0,4) -- ++ (-3,0);

			\draw[very thick] (1,1) -- ++(-3,0) -- ++(4,-4) -- ++(0,3) -- ++(-1,1);
			\draw[very thick] (2,-3)--++(-1,+2)--++(0,2)++(0,-2)--++(-1,1)--++(2,0)++(-2,0)--++(-2,1);
			\node at (1,1)[circle,fill,inner sep=2pt]{};

			\node at (-2,1)[circle,fill,inner sep=2pt]{};

			\node at (1,-1)[circle,fill,inner sep=2pt]{};

			\node at (0,0)[circle,fill,inner sep=2pt]{};

			\node at (2,0)[circle,fill,inner sep=2pt]{};

			\node at (2,-3)[circle,fill,inner sep=2pt]{};

			\draw[very thick] (7,1) -- ++(-3,0) -- ++(4,-4) -- ++(0,3);
			\draw[very thick] (8,0) -- ++(-1,1);
			\draw[very thick] (8,-3)--++(-1,0);
			\draw[very thick] (7,-3)--++(0,4);
			\draw[very thick] (7,-3) -- ++(-3,3) -- ++ (4,0);
			\draw[very thick] (4,0) -- ++(0,1);

			\node at (7,1)[circle,fill,inner sep=2pt]{};

			\node at (4,1)[circle,fill,inner sep=2pt]{};

			\node at (7,-3)[circle,fill,inner sep=2pt]{};

			\node at (4,0)[circle,fill,inner sep=2pt]{};

			\node at (8,0)[circle,fill,inner sep=2pt]{};

			\node at (8,-3)[circle,fill,inner sep=2pt]{};

		\end{tikzpicture}
	\end{center}
	In the moment image of the Eschenburg flag, there is no convex subgraph containing the two edges spanning the shaded cone. Hence this moment image cannot come from a Kähler action. On the contrary every such cone generated by adjacent edges in the moment image of the standard flag is part of a convex subgraph. Proving that a $T$-manifold belonging to the left hand GKM graph does not admit a compatible Kähler structure requires more work: one needs to systematically apply the above criterion to all possible linear presentation compatible with the underlying unsigned GKM graph. This is Tolman's strategy  for constructing Hamiltonian non-Kähler actions in \cite{Tolman} and the left hand example is, at least combinatorially, precisely Tolman's example. We will discuss the relations in more detail in Example \ref{ex:twe}. We shall see more examples of this type in the discussion of GKM fibrations in Section \ref{sec:dim6}.
\end{ex}

\subsection{The GKM correspondence} \label{sec:GKMcorrespondence}

There are several natural notions of isomorphism of abstract GKM graphs. The easiest one is the following:

\begin{defn}\label{defn:GKMgraphiso}
	An \emph{isomorphism} $\varphi:\Gamma\to \Gamma'$ of abstract GKM graphs is an isomorphism  of abstract graphs such that
	\[
		\alpha(\phi(e)) = \alpha(e)
	\]
	for all edges $e$ of $\Gamma$.
\end{defn}
Any isomorphism of the GKM graphs of GKM actions on $M$ and $N$ induces a map between the corresponding equivariant cohomology algebras which is an $H^*(BT)$-algebra isomorphism. Sometimes one allows, in the definition of an isomorphism of abstract GKM graphs, an additional automorphism $\psi:T\to T$ of the torus such that $\alpha(\phi(e)) = \alpha(e)\circ (d\psi)^*$. In this case, the induced map on equivariant cohomology is only a twisted isomorphism.

The \emph{GKM correspondence} is the map
\[
	\left\{ \textrm{GKM manifolds} \right\}\longrightarrow \left\{  \textrm{abstract (orientable) GKM graphs} \right\}
\]
which we consider as a generalization of the well-known correspondences in toric geometry sending a toric symplectic manifold to its Delzant polytope \cite{Delzant}, a toric variety to its fan \cite[Chapter 5]{ToricTopology}, or a quasitoric manifold to its orbit space and its characteristic function \cite{DavisJ}. In view of the additional structure imposed by invariant geometric structures on a GKM manifold one may also consider variants of this correspondence, such as e.g.\
\[
	\left\{ \textrm{almost complex GKM manifolds} \right\}\longrightarrow \left\{  \textrm{abstract signed GKM graphs} \right\}.
\]
Note also that these correspondences may be investigated for integer as well as rational GKM manifolds; it becomes apparent that the term \emph{GKM correspondence} in fact comprises a whole collection of related correspondences between geometric and combinatorial objects.

Two natural questions arise:
\begin{itemize}
	\item The \emph{GKM realization} question asks which abstract GKM graphs are realizable by GKM manifolds.
	\item The \emph{GKM rigidity} question asks whether two simply-connected GKM manifolds with isomorphic GKM graphs are isomorphic, where the isomorphism can be understood in various different ways such as (twisted) (equivariant) homotopy equivalence, homeomorphism or diffeomorphism.
\end{itemize}
Again, both questions may be investigated in the smooth category or in presence of additional geometric structures. For rigidity questions it seems reasonable to restrict to the integer GKM case unless one is willing to relax the notion of equivalence accordingly (e.g.\ to rational equivalence in the sense of rational homotopy theory). Even though GKM manifolds have vanishing odd cohomology, the additional assumption of simply-connectedness in the rigidity question is necessary: for instance one can change the fundamental group without violating the GKM conditions by modifications in the regular stratum related to taking connected sum of the orbit space with homology spheres, see \cite[Theorem 2.4]{WiemelerExotic}.

\begin{rem}\label{rem:rigidity}
	One motivation for the rigidity question is given by the fact that the GKM graph of a GKM manifold determines many of its topological properties, see Proposition \ref{prop:char classes and everything else}. Note that in order to obtain this description of topological invariants one only needs a torus of dimension at least $2$ independent of the dimension of the manifold. Thus this powerful result holds independently of the so-called complexity of the action, which for a $T^k$-action on a $2n$-dimensional manifold is defined as the number $n-k$. In the presence of a fixed point it is always non-negative. Although independence of the complexity is part of the appeal of the GKM setup, the complexity does play an important role in the context of rigidity questions. Actions below are not assumed to be GKM as the mentioned results are not specifically for the GKM setup (although related). The complexity $0$ case, i.e.\ $T^n$ acting on $M^{2n}$, is very well understood and often completely encoded in the combinatorics of the action. A nice introduction and overview is given in \cite[Chapter 7]{ToricTopology}. More recently there has been progress in complexity $1$: although the situation is more complicated than the complexity $0$ case, a rigidity result for certain complexity $1$ actions with connected stabilizers has been obtained in \cite[Theorem 5.5]{Ayzenberg} characterizing them up to equivariant homeomorphism through certain characteristic data. For complexity $\geq 2$, rigidity questions seem to be wide open.
\end{rem}

Concerning the realization question one observes directly that because of \eqref{eq:cohomiso} the  structure of the equivariant cohomology of a GKM action immediately imposes additional restrictions on the abstract GKM graphs that are realizable by a GKM manifold.
\begin{prop}\label{prop:necessaryforrealization}
	If an abstract GKM graph $(\Gamma,\alpha)$ is realizable by an rational GKM manifold, then
	\begin{enumerate}
		\item its equivariant graph cohomology $H^*_T(\Gamma,\alpha;\QQ)$ is a free $H^*(BT;\QQ)$-module.
		\item its graph cohomology $H^*(\Gamma,\alpha;\QQ)$ satisfies Poincar\'e duality.
	\end{enumerate}
\end{prop}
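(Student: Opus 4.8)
The plan is to deduce both statements from the cohomological isomorphisms in \eqref{eq:cohomiso} together with the equivariant formality of a rational GKM manifold. Suppose $M$ is a rational GKM manifold realizing $(\Gamma,\alpha)$. Then by the remark following Proposition \ref{prop:GKMdescription} we have an isomorphism of $H^*(BT;\QQ)$-algebras $H^*_T(\Gamma,\alpha;\QQ)\cong H^*_T(M;\QQ)$ and an isomorphism of $\QQ$-algebras $H^*(\Gamma,\alpha;\QQ)\cong H^*(M;\QQ)$. So it suffices to verify that the right-hand sides have the claimed properties.

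For (i): by Definition \ref{def:equivariant formal}, since $M$ is a rational GKM manifold it is in particular equivariantly formal over $\QQ$, which by definition means precisely that $H^*_T(M;\QQ)$ is a free $H^*(BT;\QQ)$-module. Transporting this across the isomorphism gives that $H^*_T(\Gamma,\alpha;\QQ)$ is free over $H^*(BT;\QQ)$.

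For (ii): $M$ is a compact, connected, orientable manifold, hence its rational cohomology $H^*(M;\QQ)$ satisfies Poincar\'e duality. (One should note here that $M$ is even-dimensional: a nonempty fixed point set forces $\dim M$ to be even, as observed in Section \ref{sec:associatinggraph}, and the fixed point set is nonempty by Proposition \ref{prop:efhasfixedpoints} applied with $H=T$, so the usual Poincar\'e duality pairing is in the expected degrees; in any case orientable closed manifolds satisfy Poincar\'e duality regardless of parity.) Since $H^*(\Gamma,\alpha;\QQ)\cong H^*(M;\QQ)$ as graded $\QQ$-algebras by the last part of Proposition \ref{prop:ef} combined with \eqref{eq:cohomiso}, the graph cohomology $H^*(\Gamma,\alpha;\QQ)$ inherits Poincar\'e duality.

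The main thing to be careful about — rather than a genuine obstacle — is that everything here is essentially a translation exercise: the content is already packaged in \eqref{eq:cohomiso}, in the defining property of equivariant formality, and in classical Poincar\'e duality for orientable closed manifolds. The one point meriting a sentence of justification is that $H^*(\Gamma,\alpha;\QQ)$ really is isomorphic to $H^*(M;\QQ)$ as a \emph{graded} algebra and not merely as a vector space, so that "satisfies Poincar\'e duality" is a meaningful and preserved statement; this is exactly what the second isomorphism in \eqref{eq:cohomiso} provides, using that the kernel of $H^*_T(M;\QQ)\to H^*(M;\QQ)$ is the ideal generated by $H^{>0}(BT;\QQ)$ (Proposition \ref{prop:ef}), which matches the defining quotient in the definition of graph cohomology.
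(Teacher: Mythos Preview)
Your proof is correct and matches the paper's approach: the paper does not give a formal proof but states directly that the proposition follows from the isomorphisms \eqref{eq:cohomiso}, and you have filled in precisely those details (freeness from the definition of equivariant formality, Poincar\'e duality from $M$ being a closed orientable manifold). The parenthetical about even-dimensionality is harmless but unnecessary.
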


	An integral version of the same statement as in Proposition \ref{prop:necessaryforrealization} hold true if one imposes assumptions on the isotropy groups of the realizing action as in Proposition \ref{prop:char classes and everything else}. 
	
\begin{rem}The conditions on an abstract GKM graph that turned out to be necessary for realizability in the above proposition may be added on the combinatorial side of the GKM correspondence. However we point out that the relations between the abstract combinatorial conditions are in need of further study, in particular when it comes to conditions that concern the graph cohomology.
	We showed in \cite{GKZcorrespondence} that for an abstract $T^2$-GKM graph freeness of rational equivariant graph cohomology is automatic. We do not know about graphs for higher rank tori or the situation with integer coefficients.

	Poincar\'e duality of the graph cohomology is a non-void condition. For example, one may consider $T$-actions on manifolds that satisfy all conditions of a GKM actions except orientability, such that the one-skeleton is still a union of two-spheres, to obtain abstract GKM graphs that do not satisfy Poincar\'e duality, see \cite[Example 2.22]{GKZcorrespondence}.

	In \cite{Solomadin} Solomadin constructed certain GKM graphs that are not realizable by a GKM manifold; it would be interesting to investigate their (equivariant) graph cohomology in light of these properties.
\end{rem}

In Sections \ref{sec:dim4}, \ref{sec:dim8}, and \ref{sec:dim6} we will elaborate on the known results concerning both the realization and the rigidity question in relation to the dimension of the manifold. Briefly, the situation is well understood in dimension $\leq 4$ and several versions of GKM rigidity will turn out to be true in dimension $6$, but not in dimensions $8$ and higher. Realization results exist in dimension $\leq 6$; the situation in dimensions $8$ and higher is wide open.

\section{Dimension $\leq 4$}\label{sec:dim4}

In dimension $2$, a compact orientable surface $M$ admitting a circle action is necessarily $S^2$, by the same argument as for the proof of Proposition \ref{prop:Ntwospheres}. In fact, if the action is effective, it is necessarily equivariantly diffeomorphic to the standard action on $S^2$, which can be seen as follows: the orbit space of the action is homeomorphic to a closed interval, with the only two singular orbits corresponding to its end points. It follows that $M$ is an equivariant gluing of two discs along there boundary, which results in the standard action on $S^2$. This sketch of an argument is also the easiest special case of the correspondence between cohomogeneity-one actions and their group diagrams, see for example \cite[Theorem 7.1]{Alekseevsky2}.

Given a GKM action of a torus $T$ on a $4$-dimensional simply-connected manifold $M$, linear independence of the weights in the fixed points implies that the dimension of $T$ is necessarily $2$. Hence, in this situation we are still in the case where the acting torus has half the dimension of the manifold, so that more classical theory applies. In fact, $M$ is what is usually called a \emph{torus manifold} \cite{MasudaUnitary, MasudaPanov}, i.e., a closed connected orientable even-dimensional smooth manifold with an action of a torus of half dimension with nonempty fixed point set.

As $M$ is simply-connected, by the universal coeffient theorem and Poincar\'e duality its odd integer cohomology vanishes, so that \cite[Theorem 2]{MasudaPanov} is applicable: the torus manifold $M$ is locally standard and its orbit space is face-acyclic. In other words, the orbit space $M/T$ is homeomorphic to a closed disc, in such a way that the orbit space $M_1/T$ of the one-skeleton of the action is the boundary of this disc, and every point in the interior has trivial isotropy. Note that in case the action has at least three fixed points, we may consider the orbit space as a two-dimensional convex polytope, hence we are in the realm of quasitoric manifolds \cite{DavisJ}. In this low dimension it was observed in \cite[Theorem on p.\ 537]{OrlikRaymondI} or \cite[Section 6]{WiemelerExotic} that the data of the face poset of the orbit space, together with the isotropy groups, determine $M$ not only up to equivariant homeomorphism but up to equivariant diffeomorphism. The latter reference argues that this is the case in dimension $6$, but the same proof works in dimension $4$. In higher dimension the problem is related to that of exotic smooth structures on discs. Also GKM realization works in this dimension: in case the graph has only $2$ vertices, it is realized by a linear action on $S^4$; in case of more vertices, one may regard the GKM graph as the boundary of a convey polytope, so that the classical construction of a quasitoric manifold with given orbit space and characteristic function \cite{DavisJ} applies.  We obtain a one-to-one correspondence

\begin{center}
	\begin{tikzpicture}
		\node (a) at (0,0) {$\left\{\begin{array}{c}
					\text{4-dim.\ simply-connected} \\
					\text{$T^2$-GKM manifolds}
				\end{array}\right\}$};

		\node (b) at (8.7,0) {$\left\{\begin{array}{c}
					\text{2-valent GKM graphs}\end{array}\right\}$};

		\draw[<->] (a) -- (b);
	\end{tikzpicture}
\end{center}
where the left hand side is considered up to equivariant diffeomorphism, and the right hand side up to isomorphisms of GKM graphs. More details on this as well as the claims below are provided in \cite[Section 5.1]{GKZ}.

Regarding the almost complex correspondence, any $2$-valent signed GKM graph is realized by a $4$-dimensional quasitoric manifold with invariant almost complex structure. This follows from \cite{Kustarev} because the signed structure yields what is called a positive omniorientation. We are not aware of a solution of the almost complex rigidity question.

In dimension $4$ the Hamiltonian and invariant Kähler case coincide and reduce to the classical Delzant correspondence \cite{Delzant}: the momentum image of such an action is a Delzant polytope, which immediately gives the GKM graph as all edges in the graph are visible on the boundary of the polytope. Conversely manifolds with the same momentum polytope (up to translation) are equivariantly symplectomorphic. We note that the datum of the GKM graph is not quite enough to obtain symplectic rigidity as the length of the edges in the momentum image is not encoded in the abstract graph but does play a role for the symplectic structures.
\section{Dimension $\geq 8$}\label{sec:dim8}

In \cite{GKZrigid}, we showed that in dimension $8$ the GKM rigidity question can be answered in the negative, by exhibiting two $8$-dimensional $T^3$-GKM manifolds with identical GKM graphs that do not have the same homotopy groups. By taking products, e.g.\ with factors of $S^2$, one then also obtains examples in higher dimensions.
\begin{thm}[{\cite[Theorem 1.1]{GKZrigid}}] \label{thm:rigidity kaputt}
	The total spaces of the two $\CC P^1$-bundles over $S^6$ admit $T^3$-GKM actions with identical GKM graphs.
\end{thm}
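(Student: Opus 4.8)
The plan is to put a $T^3$-GKM action on each of the two $\CC P^1$-bundles over $S^6$ and then to check that the two GKM graphs agree. Recall that oriented $S^2$-bundles, equivalently $\CC P^1$-bundles, over $S^6$ are classified by $[S^6,B\SO(3)]=\pi_5(\SO(3))=\pi_5(S^3)=\ZZ/2$, so there are exactly two: the trivial bundle $E_0=S^6\times\CC P^1$ and a nontrivial one $E_1$. The point of the statement is that $E_0$ and $E_1$ are not even homotopy equivalent -- their homotopy groups differ, as one reads off from the long exact sequence of the bundle together with the isomorphism $\pi_5(\SO(3))\cong\pi_5(S^2)$ -- so that finding identical GKM graphs on them exhibits the failure of GKM rigidity in dimension $8$.

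On $E_0$ one takes the product of the $T^2$-action on $S^6$ from Example~\ref{ex:spheres} with the rotation of $\CC P^1$ by a third circle. This action is GKM: it has exactly four fixed points, lying in pairs over the two base fixed points; it is equivariantly formal because $H^{odd}(E_0)=0$ (Proposition~\ref{prop:ef}); and at each fixed point the three base weights -- which lie in $\ZZ^2\times\{0\}$ -- together with the fibre weight $\pm e_3$ are pairwise linearly independent. Its GKM graph $\Gamma$ is then immediate: two disjoint copies of the three-edge graph of $S^6$ from Example~\ref{ex:spheres}, joined by the two fibre spheres, each labelled $e_3$.

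The substantial step is to realize $E_1$ with a $T^3$-action whose graph is again $\Gamma$. Here the approach is an equivariant clutching construction: write $S^6=D_+\cup_{S^5}D_-$ as a union of two invariant hemispheres around the two base fixed points, glued along the invariant equator $S^5$; over each hemisphere an equivariant $\CC P^1$-bundle is trivial, and one obtains $E_1$ by gluing the two products $D_\pm\times\CC P^1$ along $S^5$ by a clutching map into $\mathrm{PU}(2)$ whose class generates $\pi_5(\mathrm{PU}(2))=\ZZ/2$. One arranges this clutching -- and the torus actions it is required to be compatible with -- so that the total space is the nontrivial bundle while the residual torus action on the fibres over the two base fixed points is exactly that used for $E_0$. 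One then rechecks that the action is GKM: equivariant formality follows again from Proposition~\ref{prop:ef}, since the Serre spectral sequence of the $\CC P^1$-bundle over $S^6$ degenerates for parity reasons (both $H^*(S^6)$ and $H^*(\CC P^1)$ being concentrated in even degrees), so $H^{odd}(E_1)=0$; discreteness of the fixed set and linear independence of the weights are checked as before. If one wants the actions to be integer GKM one also verifies the stabilizer condition of Theorem~\ref{thm:csl over Q} off the one-skeleton.

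Granting this construction, the equality $\Gamma(E_1)=\Gamma$ is essentially automatic. The one-skeleton of the $T^3$-action on $E_i$ is the union of the fibre spheres over the two base fixed points with the invariant two-spheres lying over the $T^2$-one-skeleton of $S^6$; and over each such base two-sphere $C\subset S^6$ the bundle $E_1$ is trivial, because $C=S^2\hookrightarrow S^6$ is nullhomotopic ($\pi_2(S^6)=0$) and hence so is its classifying map to $B\SO(3)$. Thus the vertices, edges, and weight labels recorded by the graph coincide with those of $E_0$. I expect the genuine obstacle to be the equivariant construction of $E_1$: realizing the generator of $\pi_5(\mathrm{PU}(2))$ by a clutching map equivariant for torus actions chosen so as to create no extra fixed points, preserve effectivity and equivariant formality, and leave the graph unchanged -- the bare existence of the nontrivial bundle being elementary, it is this equivariant refinement, whether by an explicit geometric model of $E_1$ or by equivariant obstruction theory, that carries the weight.
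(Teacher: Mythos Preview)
Your outline contains a genuine obstruction, not merely an unfilled gap. On $E_0$ you use the product action in which the third circle acts trivially on the base $S^6$ and rotates only the fibre, so that the vertical edges of $\Gamma$ carry the label $\pm e_3$ and the horizontal labels lie in $\ZZ^2\times\{0\}$. For $E_1$ to acquire the \emph{same} graph via a bundle-preserving $T^3$-action, the isotropy representation at each base fixed point forces the third circle to act trivially on $T_pS^6$ and hence, by connectedness, trivially on all of $S^6$. But then this circle acts fibrewise on $E_1\to S^6$, and its fixed set meets every fibre $\CC P^1$ in exactly two points; since $S^6$ is simply connected this double cover is trivial and yields two disjoint sections of $E_1$. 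A section of a $\CC P^1$-bundle over $S^6$ exhibits it as $\PP(\mathcal{O}\oplus L)$ for some line bundle $L$, and $H^2(S^6;\ZZ)=0$ forces $L$ to be trivial. Thus $E_1\cong S^6\times\CC P^1$, a contradiction. Equivalently, in your clutching picture the requirement that the third circle act by the same rotation on every fibre forces the clutching map $c\colon S^5\to\mathrm{PU}(2)$ to take values in the maximal torus $T^1\cong S^1$, and $\pi_5(S^1)=0$. So no $T^3$-action on $E_1$ can realise your product graph $\Gamma$; the difficulty you flag at the end is not a technicality but an impossibility.

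The paper proceeds quite differently and in a way that sidesteps this obstruction. It lets the \emph{full} $T^3$ act on the base $S^6$ in the standard linear fashion (horizontal weights $e_1,e_2,e_3$), and it does not clutch over $S^6$ at all. Instead it realises the nontrivial bundle as a pullback: one makes a homotopically nontrivial map $f\colon S^6\to S^4$ equivariant for the standard $T^3$-action on $S^6$ and a suitable $T^3$-action on $S^4$, lifts the latter to the twistor bundle $\CC P^3\to S^4$, and pulls back. The resulting vertical weight is $(1,-1,-1)$, so no circle factor acts purely on the fibre, and the section argument above does not apply. The matching action on $E_0=S^6\times\CC P^1$ is then the analogous pullback of the trivial action, not the naive product you wrote down. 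In short, the pullback-along-an-equivariant-map idea is not just an alternative to equivariant clutching here; it is what makes the construction possible.
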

In fact, the actions we constructed have connected stabilizers and are even GKM$_3$, i.e., at any fixed point any three weights are linearly independent.

\begin{rem}
	As is discussed in more detail in \cite{GKZrigid}, rigidity up to non-equivariant homeomorphism is known whenever the complexity of the action (see Remark \ref{rem:rigidity}) is $0$ \cite[Theorem 3.4]{WiemelerTorusManifolds}, the number of fixed points is $\leq 3$ \cite{GKZrigid}, or the dimension is $\leq 6$ (see Theorem \ref{thm:dim6diffeotype} below). Thus the above example is minimal with respect to these three parameters.
\end{rem}

Let us briefly explain the idea of the construction: principal $\SU(2)$-bundles over $S^6$ are classified by homotopy classes of maps $S^6\to S^4 = {\mathbb{H}} P^1 \subset {\mathbb{H}} P^\infty$. As $\pi_6(S^4)=\ZZ_2$, there are precisely two such bundles, the trivial one induced by the constant map, and a nontrivial one induced by a homotopically nontrivial map $f:S^6\to S^4$: it is the pullback of the Hopf bundle $S^7\to S^4$ via $f$. Taking the fiberwise quotient by the maximal torus of $\SU(2)$ results in the trivial bundle $S^6\times \CC P^1$ and the pullback of the $\CC P^1$-bundle $\CC P^3\to S^4$ via $f$, whose total space we call $X$.
\[\xymatrix{
		X\ar[r]\ar[d] & \CC P^3\ar[d]\\
		S^6 \ar[r]^f & S^4
	}\]
One computes directly that $\pi_5(S^6\times S^2) = \pi_5(S^2)= \ZZ_2\neq 0 = \pi_5(X)$, so that these two spaces are not homotopy equivalent.

The nontrivial map $f$ a priori is unrelated to any group action; the main idea of the construction is to make $f$ equivariant with respect to the standard $T^3$-action on $S^6$ and a certain $T^3$-action on $S^4$. Then we lift the action on $S^4$ to the  bundle $\CC P^3\to S^4$ and consider the pullback action on $X$. The GKM graph of the actions constructed is given as follows.
\begin{center}
	\begin{tikzpicture}

		\node (a) at (0,0)[circle,fill,inner sep=2pt] {};
		\node (b) at (6,0)[circle,fill,inner sep=2pt]{};
		\node at (3,1) {$(1,0,0)$};
		\node at (3,0.3) {$(0,1,0)$};
		\node at (3,-1) {$(0,0,1)$};

		\draw [very thick](a) to[in=160, out=20] (b);
		\draw [very thick](a) to (b);
		\draw [very thick](a) to[in=200, out=-20] (b);

		\node (c) at (0,4)[circle,fill,inner sep=2pt] {};
		\node (d) at (6,4)[circle,fill,inner sep=2pt]{};
		\node at (3,5) {$(1,0,0)$};
		\node at (3,4.3) {$(0,1,0)$};
		\node at (3,3) {$(0,0,1)$};

		\draw [very thick](c) to[in=160, out=20] (d);
		\draw [very thick](c) to (d);
		\draw [very thick](c) to[in=200, out=-20] (d);

		\draw [very thick] (c) to (a);
		\draw [very thick] (d) to (b);

		\node at (7.2,2) {$(1,-1,-1)$};
		\node at (-1.2,2) {$(1,-1,-1)$};

	\end{tikzpicture}
\end{center}

We also obtained a Hamiltonian variant of this pair of actions. For this variant, not only the GKM graphs coincide but also their momentum images. Consider the map $k:\CC P^3\to \CC P^3/\CC P^2\cong S^6$ which is obtained by collapsing $\CC P^2=\{[z_0:\ldots : z_3]\mid z_3=0\}$ inside $\CC P^3$. This map is equivariant with respect to the standard $T^3$-action on $\CC P^3$ in the first three homogeneous coordinates, so that we can pull back our two bundles via $k$ to obtain two $\CC P^1$-bundles over $\CC P^3$, the trivial one $\CC P^3\times \CC P^1$ and a space $Y$, both equipped with GKM $T^3$-actions.
\[\xymatrix{
		Y \ar[r]\ar[d] & X\ar[r]\ar[d] & \CC P^3\ar[d]\\
		\CC P^3 \ar[r]^k & S^6 \ar[r]^f & S^4
	}\]
\begin{thm}[\cite{GKZrigid}] The $T^3$-spaces $\CC P^3\times \CC P^1$ and $Y$ satisfy:
	\begin{enumerate}
		\item They are not homotopy equivalent, as $\pi_6(Y) = \ZZ_6$ and $\pi_6(\CC P^3\times \CC P^1) = \ZZ_{12}$.
		\item They are GKM (in fact GKM$_3$) with the same GKM graph.
		\item Both admit a non-invariant K\"ahler structure.
		\item Both admit a $T^3$-invariant symplectic structure with compatible momentum maps whose x-rays coincide.
	\end{enumerate}
\end{thm}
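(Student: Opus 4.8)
The plan is to realise both spaces as $\CC P^1$-bundles over $\CC P^3$ — namely $\CC P^3\times\CC P^1=\PP(\underline{\CC}^2)$ and $Y=\PP(E')$ with $E'=(f\circ k)^*E$, where $E$ is the Hermitian rank-$2$ bundle over $S^4$ underlying the quaternionic Hopf bundle — and to read off all four assertions from this description, the $T^3$-actions being those induced through the equivariant maps $k$ and $f$. For \emph{(i)}, the product gives $\pi_6(\CC P^3\times\CC P^1)=\pi_6(S^7)\oplus\pi_6(S^2)=0\oplus\ZZ_{12}$. For $Y$ I would use the long exact sequence of $S^2\to Y\to\CC P^3$; since $\pi_6(\CC P^3)=\pi_6(S^7)=0$, this gives $\pi_6(Y)=\operatorname{coker}\big(\partial\colon\pi_7(\CC P^3)\to\pi_6(S^2)\big)$, and the equivariant bundle map $Y\to X$ over $k$ identifies $\partial$ with $\partial_X\circ k_*$, where $\partial_X$ is the connecting map of $S^2\to X\to S^6$. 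Two inputs finish the count. First, $k_*\colon\pi_7(\CC P^3)\to\pi_7(S^6)$ is onto: the Hopf projection $p\colon S^7\to\CC P^3$ generates $\pi_7$, its mapping cone is $\CC P^4$, so the cone of $k\circ p\colon S^7\to S^6$ is the stunted space $\CC P^4/\CC P^2$, on which $\mathrm{Sq}^2\colon H^6\to H^8$ is nonzero; hence $k\circ p$ generates $\pi_7(S^6)=\ZZ_2$. Second, $\partial_X\neq 0$: the construction yields $\pi_5(X)=0$, forcing the connecting map $\pi_6(S^6)\to\pi_5(S^2)$ to be onto, so $\partial_X$ carries the generator $\eta$ of $\pi_7(S^6)$ to the composite of the generator of $\pi_5(S^2)$ with $\eta$, namely $\eta_2\eta_3\eta_4\eta_5$, which is the unique element of order $2$ in $\pi_6(S^2)=\ZZ_{12}$. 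Therefore $\pi_6(Y)=\ZZ_{12}/\ZZ_2=\ZZ_6$, and the two spaces are not homotopy equivalent.

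For \emph{(ii)}, I would observe that the restriction of $f\circ k$ to any of the six invariant two-spheres of the toric $\CC P^3$ factors through a coordinate $S^2\subset S^6$, hence is null-homotopic, so $E'$ is non-equivariantly trivial over the $1$-skeleton of $\CC P^3$; moreover over each of the four fixed points the fibre of $E'$ carries the same $T^3$-representation as the fibre of $\underline{\CC}^2$, because $X$ and $S^6\times\CC P^1$ have equal GKM graphs by Theorem \ref{thm:rigidity kaputt} (so the fibre weight $(1,-1,-1)$ occurs over both fixed points of $S^6$). Diagonalising the action, $E'$ and $\underline{\CC}^2$ then restrict to the \emph{same} equivariant bundle over the $1$-skeleton of $\CC P^3$, so $\PP(E')$ and $\PP(\underline{\CC}^2)$ have the same one-skeleton, hence the same GKM graph. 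Both are equivariantly formal by Proposition \ref{prop:ef} — a $\CC P^1$-bundle over $\CC P^3$ has cohomology in even degrees only — with eight fixed points, and the GKM$_3$ condition (any three of the four weights at a fixed point linearly independent) is a finite determinant check with the explicit weights, the three weights of the standard $\CC P^3$-action together with $(1,-1,-1)$. For \emph{(iii)}, $\CC P^3\times\CC P^1$ is Kähler, and for $Y=\PP(E')$ one notes that $E'$ has topological type with $c_1=c_2=0$ and nontrivial $\ZZ_2$-invariant in $H^6(\CC P^3;\ZZ_2)$ (inherited from the nontrivial $\SU(2)$-bundle over $S^6$ via the isomorphism $k^*$ on $H^6$); by the Atiyah–Rees classification every topological rank-$2$ bundle on $\CC P^3$ is realised algebraically, so there is a holomorphic $F$ topologically isomorphic to $E'$, and then $\PP(F)$ is a smooth projective variety diffeomorphic to $Y$.

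For \emph{(iv)}, I would put a $T^3$-invariant Hermitian metric on $E'$, form the fibrewise Fubini–Study forms on $\PP(E')$, extend them to a closed invariant $2$-form $\sigma$ by the minimal-coupling (symplectic fibration) construction, and set $\omega_Y=\sigma+C\,\pi^*\omega_{\mathrm{FS}}$, symplectic for $C\gg 0$; since $Y$ is simply connected, this invariant symplectic $T^3$-action is automatically Hamiltonian. Running the same recipe on $\CC P^3\times\CC P^1$ with the same constant $C$ and the same fibrewise data — legitimate since by (ii) the two bundles agree equivariantly over the relevant fixed loci — produces momentum maps that agree at all eight fixed points; since by Atiyah–Guillemin–Sternberg convexity the momentum image of each component of $M^H$ is the convex hull of the fixed points it contains, and the combinatorics of which fixed points lie in a common $M^H$ is encoded in the (common) GKM graph, the two x-rays coincide.

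The main obstacles I anticipate are, in \emph{(i)}, identifying the connecting map $\partial_X$ and establishing the two homotopy-theoretic facts $k\circ p=\eta$ and $\eta_2\eta_3\eta_4\eta_5\neq 0$ in $\pi_6(S^2)$; in \emph{(iii)}, the appeal to Atiyah–Rees that the exotic topological rank-$2$ bundle on $\CC P^3$ is algebraic; and in \emph{(iv)}, verifying that twisting $\underline{\CC}^2$ to $E'$ leaves the momentum values at the fixed points unchanged — which works precisely because the twist of $E'$ is supported away from the fixed loci, so that the equivariant structure over those loci, and hence the fixed-point momentum data, is unaffected.
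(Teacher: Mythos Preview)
Your proposal is essentially correct and considerably more detailed than what this survey provides: the paper does not prove the theorem but merely cites \cite{GKZrigid}, adding only two remarks --- that the K\"ahler structure on $Y$ comes from its being the projectivization of a complex vector bundle over $\CC P^3$ (referring to \cite[Remark 5.5]{GKZrigid}), and that the invariant symplectic form arises from Thurston's construction for symplectic fibrations. Your treatment of \emph{(iii)} and \emph{(iv)} follows precisely these hints, and your elaboration of the missing steps (Atiyah--Rees realisability of the topological bundle $E'$ by a holomorphic one; the minimal-coupling form plus a large multiple of the base form) is the natural way to fill them in.

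For \emph{(i)} and \emph{(ii)} the survey says nothing beyond the statement, so there is no paper proof to compare against. Your argument for \emph{(i)} is sound: the identification of $k\circ p$ with $\eta$ via $\mathrm{Sq}^2$ on $\CC P^4/\CC P^2$ is correct, and the passage from $\partial_X$ on $\pi_6$ to $\pi_7$ works because the connecting map is induced by a map $\Omega S^6\to S^2$ whose restriction to the bottom $5$-cell you have pinned down; then $\eta_2\eta_3\eta_4\eta_5=6\nu'\neq 0$ in $\pi_6(S^2)\cong\pi_6(S^3)$ by Toda's tables. For \emph{(ii)} your idea is right, but the step ``diagonalising the action, $E'$ and $\underline{\CC}^2$ restrict to the same equivariant bundle over the $1$-skeleton'' deserves one more sentence: over each invariant $S^2$ with weight $\alpha$, the fibre weight $(1,-1,-1)$ is not a multiple of $\alpha$, so the kernel of $\alpha$ acts on the fibres with two distinct characters and the rank-$2$ bundle splits equivariantly into line bundles; an equivariant line bundle over such an $S^2$ is determined by its weights at the two fixed points, which you have matched. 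This closes the gap.
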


The non-invariant K\"ahler structure on $Y$ exists, as $Y$ is the projectivization of a complex vector bundle over $\CC P^3$, see \cite[Remark 5.5]{GKZrigid}. We do not know if $Y$ admits an invariant K\"ahler structure. The invariant symplectic structure also makes use of this bundle structure, by applying a construction that goes back to Thurston \cite{Thurston}, see also \cite[Theorem 6.1.4]{McDuffSalamon}: one adds a form that restricts to a symplectic form on the fiber to a sufficiently large multiple of the pullback of the symplectic form on the base.

\section{Dimension $6$}\label{sec:dim6}

One of the most interesting playgrounds for low dimensional GKM theory is given by $T^2$-actions in dimension $6$. The reason is that this is the easiest setup in which GKM theory diverges from the classical (quasi)toric theory in complexity $0$ (cf.\ Remark \ref{rem:rigidity}). Hence it allows for both exotic examples as well as powerful results. A nice illustration of the first aspect is given by the spaces discussed in the next section.

\subsection{Exotic Hamiltonian actions and non-equivariant rigidity}

\begin{ex}\label{ex:twe} In \cite{Tolman}, Tolman constructed the first example of a simply-connected symplectic manifold admitting a Hamiltonian torus action with finitely many fixed points such that the action does not admit any invariant K\"ahler structure (which does not need to be compatible with the given symplectic structure). This is in fact the minimal dimension and complexity where this can occur since in dimension $4$ any Hamiltonian circle action with finitely many fixed points extends to a toric $T^2$-action \cite{Karshon} and a Hamiltonian $T^3$-action in dimension $6$ is toric as well.

	She constructed her example by symplectically gluing two Hamiltonian $6$-manifolds in order to arrive at a Hamiltonian $T^2$-action whose momentum image and GKM graph is the same as that of the $T^2$-action on the Eschenburg flag described in \ref{ex:eschenburg}. A third example of this kind was given in \cite{Woodward}, which is again a Hamiltonian $T^2$-action on a symplectic $6$-manifold that even extends to a multiplicity-free $\U(2)$-action. It has the same underlying combinatorics with regards to momentum image and GKM graph. The strategy how to see the non-Kählerness (in all three examples) from the GKM graph was outlined in Example \ref{ex:nonkahlerness}.

	Most interestingly, the Eschenburg flag -- as it arises as the projectivization of a rank two complex vector bundle over $\CC P^2$ -- admits a (non-invariant) K\"ahler structure (see \cite[Theorem 2]{Eschenburg}, \cite{EscherZiller}, or \cite[Section 4.2]{GKZsympbiquot}). In fact, more specifically, in the latter reference we argue that the $T^2$-equivariant symplectic form admits a compatible (integrable) complex structure. However by Tolman's arguments such a structure cannot be $T^2$-invariant. The question whether any (non-equivariant) Kähler structure exists on Tolman's and Woodward's construction is settled by Theorem \ref{thm:dim6diffeotype} below. It shows that all three examples are non-equivariantly diffeomorphic. Furthermore we note that it follows from Theorem \ref{thm:onetotonecorrespondence} that all three examples are $T^2$-equivariantly homeomorphic.
\end{ex}

Concerning the rigidity question, we observed in \cite{GKZdim6} that in this dimension, the GKM graph determines the non-equivariant diffeomorphism type.

\begin{thm}[{\cite[Theorem 3.1(b)]{GKZdim6}}]\label{thm:dim6diffeotype}
	Consider two $6$-dimensional simply-connected compact, connected smooth manifolds $M$ and $N$, equipped with $T^2$-actions of GKM type such that for all $p\notin M_1$ and $p\notin N_1$, the isotropy group $T_p$ is contained in a proper subtorus of $T$. Assume furthermore that there is an isomorphism $\varphi:\Gamma_M\to \Gamma_N$ between the associated GKM graphs (cf.\ Definition \ref{defn:GKMgraphiso}). Then there is a (non-equivariant) diffeomorphism $M\to N$ which induces the same homomorphism $H^*(N;\ZZ)\to H^*(M;\ZZ)$ as $\varphi$.
\end{thm}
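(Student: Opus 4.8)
The plan is to reduce the statement to the classification of simply-connected, closed, oriented $6$-manifolds due to Wall, Jupp and Zubr, which says that such a manifold is determined up to diffeomorphism by its cohomology ring, its second Stiefel-Whitney class, its first Pontryagin class, and the torsion linking form, together with appropriate compatibility. Since $M$ and $N$ are simply-connected GKM manifolds, their odd cohomology vanishes by Proposition \ref{prop:ef}, and by Proposition \ref{prop:GKMdescription} (together with the isomorphism \eqref{eq:cohomiso}) the graph isomorphism $\varphi:\Gamma_M\to\Gamma_N$ induces an isomorphism $H^*_T(N;\ZZ)\to H^*_T(M;\ZZ)$ of $H^*(BT;\ZZ)$-algebras, hence, after dividing by the ideal $H^+(BT;\ZZ)\cdot H^*_T(-;\ZZ)$ and using the last part of Proposition \ref{prop:ef}, an isomorphism $\psi\colon H^*(N;\ZZ)\to H^*(M;\ZZ)$ of graded rings. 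So the first step is to show that $H^*(M;\ZZ)$ and $H^*(N;\ZZ)$ are torsion-free; for a simply-connected $6$-manifold with vanishing odd cohomology this follows from Poincaré duality and the universal coefficient theorem, which kills the torsion linking form from the list of invariants and simplifies the bookkeeping considerably.

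The second step is to check that $\psi$ respects all the remaining diffeomorphism invariants. The characteristic classes of $M$ and $N$ are computed from the GKM graph by Proposition \ref{prop:char classes and everything else}: the equivariant total Pontryagin class at a fixed point $p$ is $\prod_j(1+\alpha_{pj}^2)$ and the equivariant total Stiefel-Whitney class is $\prod_j(1\pm\alpha_{pj})$, where the $\alpha_{pj}$ are exactly the labels of the edges at the vertex $p$. Since $\varphi$ is a label-preserving graph isomorphism, it matches these equivariant classes vertex by vertex, hence commutes with the restriction $H^*_T(-;\ZZ)\to H^*_T((-)^T;\ZZ)$, which is injective by the Chang--Skjelbred lemma; passing to the quotient by $H^+(BT;\ZZ)$ shows that $\psi$ carries $p_1(N)$ to $p_1(M)$ and $w_2(N)$ to $w_2(M)$. (One should also note $w_2$ lifts to an integral class, or equivalently track the mod-$2$ reduction, which is automatic once $w_2$ is matched.) Thus $\psi$ is an isomorphism of all the Wall--Jupp--Zubr data.

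The third step invokes the classification theorem: an isomorphism of the full invariant package is realized by an orientation-preserving diffeomorphism $M\to N$. Here one has to be slightly careful about orientations — the GKM manifolds are oriented, and one should fix the orientations so that $\psi$ is compatible with the fundamental classes; if $\psi$ reverses orientation one composes with an orientation-reversing self-diffeomorphism, or absorbs the sign into the statement, since the conclusion only asserts the existence of \emph{some} diffeomorphism inducing $\varphi$ on cohomology. Finally one records that the diffeomorphism furnished by the classification induces precisely $\psi$, hence the same map on cohomology as $\varphi$.

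The main obstacle I expect is the verification that the classification theorem applies \emph{with a prescribed cohomology isomorphism} — i.e.\ that Wall--Jupp--Zubr is strong enough to realize a given abstract isomorphism of the invariant data by an actual diffeomorphism, not merely to assert abstract diffeomorphism. This is known (see the survey \cite{MR1365849}), but it requires that the cohomology isomorphism $\psi$ be compatible with the cup product, Poincaré duality, the characteristic classes, \emph{and} in the torsion-free simply-connected case that is the complete list, so there is no further coherence condition to check. A secondary, more technical point is making sure that the identification $H^*(M;\ZZ)\cong H^*(\Gamma_M,\alpha;\ZZ)$ from \eqref{eq:cohomiso} is genuinely an isomorphism over $\ZZ$ and not just over $\QQ$; this is where the hypothesis on the isotropy groups of points outside the one-skeleton enters, via the integral Chang--Skjelbred lemma (Theorem \ref{thm:csl over Q}).
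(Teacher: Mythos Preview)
Your proposal is correct and follows exactly the approach indicated in the paper: combine the fact that the GKM graph determines the integral cohomology ring and the characteristic classes (Propositions \ref{prop:GKMdescription} and \ref{prop:char classes and everything else}, together with the integral Chang--Skjelbred lemma under the given isotropy hypothesis) with the Wall--Jupp--\v{Z}ubr classification of simply-connected $6$-manifolds. Your additional remarks on torsion-freeness, orientations, and the realization of a prescribed cohomology isomorphism are the right technical points to check and are handled correctly.
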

Essentially, the proof consists of combining Proposition \ref{prop:char classes and everything else}, more precisely the fact that the GKM graph determines the integral cohomology ring as well as the Pontrjagin and Stiefel-Whitney classes, with the diffeomorphism classification of simply-connected $6$-manifolds due to Wall \cite{Wall}, Jupp \cite{Jupp} and \v{Z}ubr \cite{Zubr}.

\subsection{Realization of GKM fibrations with geometric
structure}\label{sec:gkm fibrations}

In \cite{GKZ} we took the phenomena in Example \ref{ex:twe} as a motivation to systematically construct many more exotic $6$-dimensional examples of this type, with arbitrary high number of fixed points. We realized certain GKM fibrations geometrically as projectivizations of complex rank $2$ vector bundles, and investigated the existence of various geometric structures on these realizations. To formulate the theorem we make use of the notion of a GKM fibration $\Gamma\to B$, which was introduced in \cite{GKMFiberBundles}. The definition simplifies for fibrations of $3$-valent graphs over $2$-valent ones which will be our focus here, so we refrain from giving the complete technical definition but explain it by means of a picture; see \cite[Section 3]{GKZ} for more details.
\begin{center}
	\begin{tikzpicture}
		\draw[step=1, dotted, gray] (-3.5,-4.5) grid (3.5,2.5);
		\draw[step=1, dotted, gray] (5.5,-4.5) grid (11.5,2.5);

		\draw[very thick] (-2,0) -- ++(1,1) -- ++(3,0) -- ++(0,-2) -- ++(-2,-2) -- ++(-2,0) -- ++(0,3);
		\draw[very thick] (-2,0) -- ++(3,0) -- ++(0,-1) -- ++(-1,-1) -- ++(-1,0) -- +(0,3);
		\draw[very thick] (-2,-3)--++(1,1);
		\draw[very thick] (1,0)--++(1,1);
		\draw[very thick] (1,-1)--++(1,0);

		\draw[very thick] (0,-2)--++(0,-1);

		\node at (-2,0)[circle,fill,inner sep=2pt]{};

		\node at (-1,1)[circle,fill,inner sep=2pt]{};

		\node at (2,1)[circle,fill,inner sep=2pt]{};

		\node at (2,-1)[circle,fill,inner sep=2pt]{};

		\node at (0,-3)[circle,fill,inner sep=2pt]{};

		\node at (-2,-3)[circle,fill,inner sep=2pt]{};
		\node at (-1,-2)[circle,fill,inner sep=2pt]{};

		\node at (0,-2)[circle,fill,inner sep=2pt]{};
		\node at (1,-1)[circle,fill,inner sep=2pt]{};
		\node at (1,0)[circle,fill,inner sep=2pt]{};

		\draw[->, very thick] (3.5,-1) -- ++(2,0);

		\draw[very thick] (7,1)  -- ++(3,0) -- ++(0,-2) -- ++(-1,-1) -- ++(-2,0) -- ++(0,3);

		\node at (7,1)[circle,fill,inner sep=2pt]{};

		\node at (10,1)[circle,fill,inner sep=2pt]{};

		\node at (10,-1)[circle,fill,inner sep=2pt]{};

		\node at (9,-2)[circle,fill,inner sep=2pt]{};

		\node at (7,-2)[circle,fill,inner sep=2pt]{};

	\end{tikzpicture}
\end{center}
As it is easier to describe the labels we give a picture that may also be interpreted symplectically: in these GKM graphs $\Gamma$ and $B$, the labels are given by the primitive vectors in direction of the slopes of the edges. The GKM fibration, indicated by the arrow, consists of a map on vertices and on edges. Vertices are sent to vertices as indicated by the picture, by collapsing appropriate edges. On edges the map is only defined partially: for every vertex $v$, a subset of the edges at $v$ is mapped bijectively to the edges at the image of $v$, in such a way that labels (slopes) are preserved. These edges are called horizontal, the remaining (collapsed) edges are called vertical. We also assume that there are connections on both graphs that are compatible in the sense that the connection on $\Gamma$ preserves vertical and horizontal edges and covers the connection on the base graph.

We wish to realize such a situation by a projectivization of an equivariant complex rank $2$ vector bundle over a $4$-dimensional $T^2$-manifold $X$ realizing $B$ (in the picture, this would be a blow-up of $\CC P^1\times \CC P^1$), such that the collapsed edges correspond to the $\CC P^1$ fibers. As $\CC P^1$ is naturally almost complex, it is natural to consider only \emph{fiberwise signed} fibrations, i.e., we assume that there is a signed structure, only partially defined on all vertical edges, which is respected by the connection on $\Gamma$ in the sense of an abstract signed GKM graph, when transporting vertical edges. If $\Gamma$ and $B$ are equipped with globally defined signed structures which are preserved by the GKM fibration, then we speak of a \emph{signed GKM fibration}. In our situation of a fibration of a $3$-valent graph fibering over a $2$-valent one, we furthermore observe that fibrations fall into one of two categories: they are either of \emph{product type} or of \emph{twisted type}, depending on whether graph-theoretically $\Gamma$ is a product of $B$ with a line or a ``M\"obius band'' over $B$. The example above is of twisted type.

\begin{thm}[\cite{GKZ}] \label{thm:gkm fibration} Consider a GKM fibration $\pi:\Gamma\to B$, where $\Gamma$ and $B$ are integer GKM graphs, $\Gamma$ of valency $3$ and $B$ of valency $2$.
	\begin{enumerate}
		\item If $\pi$ is fiberwise signed, then $\pi$ is geometrically realized as the projectivization $\PP(E)$ of a $T$-equivariant rank $2$ complex vector bundle $E\to X$ over a four-dimensional $T^2$-manifold $X$ which can be taken to be $S^4$ if $B$ has two vertices and quasitoric if $B$ has at least $3$ vertices.

		\item If $\pi$ is signed, then its geometrical realization as in (i) can be chosen to be a fibration of almost complex manifolds such that the induced fibration of signed GKM graphs is $\pi$.

		\item If $\pi$ is signed and $B$ is the boundary of a two-dimensional Delzant polytope (i.e., $X$ can be chosen as a four-dimensional toric symplectic manifold), then any realization $\PP(E)$ as above admits both a K\"ahler structure and a $T^2$-invariant symplectic structure.
		\item If, in the situation of (iii), $\pi$ is of twisted type and $\Gamma$ has maximal, i.e., $n-1$ interior vertices, where $B$ has is an $n$-gon and $n\neq 4$, then the realization does not admit an invariant K\"ahler structure.
	\end{enumerate}
\end{thm}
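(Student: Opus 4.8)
The plan is to argue by contradiction. Suppose a realization $M=\PP(E)$ as in (iii) carries a $T^2$-invariant Kähler structure $(J,\omega)$. The invariant complex structure $J$ turns the GKM graph $\Gamma$ of $M$ into a \emph{signed} GKM graph, assigning to each isotropy weight a definite sign; the invariant symplectic form $\omega$ is Hamiltonian, and its momentum map $\mu$ restricted to the one-skeleton is an edgewise affine-linear presentation $L$ of the underlying unsigned GKM graph whose edge-slopes are the labels. By the convexity theorem of Atiyah \cite{Atiyah} and Guillemin--Sternberg \cite{GSconvexity}, $\mu(M)$ is a convex polygon and $L$ satisfies the convexity criterion of Section \ref{sec:geometricstuff}; moreover, compatibility of $J$ with $\omega$ forces the momentum segment over each invariant two-sphere to leave its source fixed point in the direction of the \emph{positive} $J$-weight, so $L$ is a linear presentation compatible with the signed structure coming from $J$. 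It therefore suffices to prove the purely combinatorial assertion that the underlying unsigned GKM graph of a twisted $\PP(E)$ with the maximal number $n-1$ of interior vertices over an $n$-gon, $n\neq 4$, admits \emph{no} pair consisting of a signed structure $s$ and an $s$-compatible linear presentation $L$ satisfying the convexity criterion together with the local convexity that Tolman's criterion imposes next.

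Next I would feed in Tolman's submanifold criterion \cite[Lemma 3.5]{Tolman}: for a fixed point $p$ and two edges $e,f\in E_p$ the two weights they carry are linearly independent and hence automatically form the minimal generating set of the two-dimensional cone they span, so invariant Kählerness gives a Hamiltonian $T$-submanifold $N$ with $T_pN=V_e\oplus V_f$. Being a four-dimensional Hamiltonian $T^2$-manifold, $N$ is toric, so $\mu$ embeds its GKM graph as a convex subgraph of $L$ through the two edges $e,f$ at $p$. For the pair of two horizontal edges at $p$ this $N$ is just the section $\cong X$ and gives nothing new; the content is in the pairs consisting of the (unique) vertical edge at $p$ and one horizontal edge, each of which recovers, over the corresponding edge $\overline e$ of $B$, the Hirzebruch-surface subbundle $\PP(E|_{S_{\overline e}})$. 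Hence the four vertices of $\Gamma$ lying over $\overline e$ must be sent by $L$ to the vertices of a convex quadrilateral with the correct edge-combinatorics, for every edge $\overline e$ of $B$.

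The combinatorial core is then the classification of signed structures on $\Gamma$ together with the propagation of these quadrilateral conditions around the polygon $B$. Since $\pi$ is of twisted type, parallel transport of the vertical edge once around $B$ reverses its sign, so writing the vertical labels over the successive vertices of $B$ in terms of the horizontal ones yields a sequence of ``twist coefficients'' whose increments are governed by the connection and whose total variation around the cycle is forced by this $-1$-monodromy; the hypothesis of $n-1$ interior vertices pins these coefficients to the extremal configuration. Running the convex-quadrilateral conditions at the consecutive edges of $B$, one is then forced to rotate the $L$-images of the two strands of $\Gamma$ monotonically in a common sense, which for $n\ge 5$ overshoots a full revolution and for $n=3$ reproduces exactly the non-convex corner already exhibited for the Eschenburg flag in Example \ref{ex:nonkahlerness} -- either way contradicting the convexity criterion for $L$. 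The case $n=4$ is genuinely exceptional since there the accumulated rotation fits precisely into one revolution. This contradiction proves the theorem, and the case $n=3$ in particular re-proves the non-Kählerness of Tolman's and Woodward's examples discussed in Example \ref{ex:twe}.

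The step I expect to be the main obstacle is the bookkeeping in the previous paragraph: making ``accumulated rotation'' rigorous requires coordinates on $\mft^*$ adapted to the base polygon, careful sign tracking of the weights under the connection in both a global twisted normalization and a local product normalization, and -- crucially -- showing that \emph{no} choice of signed structure (equivalently, of invariant complex structure) offers an escape, i.e.\ a uniform-in-$n$ version of the finite case analysis that Tolman carries out for a single graph. A secondary subtlety is to pin down exactly the compatibility between the sign conventions induced by $J$ and the direction conventions induced by $\mu$, so that ``$s$-compatible linear presentation'' is used with precisely the hypothesis needed by both the convexity criterion and Tolman's criterion.
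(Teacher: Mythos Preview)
Two things to flag.

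First, the paper is a survey and does not prove this theorem at all; it is quoted from \cite{GKZ} with only a surrounding discussion. So there is no ``paper's own proof'' to compare against here. That said, the strategy you outline for part (iv) --- assume an invariant K\"ahler structure, extract the induced signed structure and momentum-image presentation, then feed Tolman's submanifold criterion \cite[Lemma 3.5]{Tolman} into a systematic scan of linear presentations to derive a contradiction --- is exactly the method the survey itself points to in Example \ref{ex:nonkahlerness} and attributes to Tolman. Your identification of the Hirzebruch-surface quadrilaterals over each base edge as the relevant convex subgraphs is correct, and the honest acknowledgement that the hard part is the uniform-in-$n$ bookkeeping (tracking signs, twist coefficients, and ruling out \emph{all} signed structures rather than a single one) matches the actual content of the argument in \cite{GKZ}.

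Second, and more seriously: you have only addressed part (iv). Parts (i)--(iii) are existence/realization statements and require entirely different machinery --- constructing the equivariant rank $2$ bundle $E$ over the quasitoric base (or $S^4$) from the combinatorial fibration data, invoking Kustarev-type results or explicit constructions for the almost complex structure in (ii), and using the Thurston-type fiberwise symplectic construction (as in \cite[Theorem 6.1.4]{McDuffSalamon}) together with the projectivization-of-a-holomorphic-bundle argument for (iii). None of this is touched in your proposal. If the intent was only to prove (iv), that should be stated; if the intent was to prove the full theorem, the realization parts are a genuine gap.
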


Here, a vertex $v$ of a $3$-valent signed $T^2$-GKM graph is called \emph{interior} if the cone spanned by the weights of the three edges leaving $v$ span all of $\RR^2$.
The example above of a projectivization of a complex rank $2$ vector bundle over a blow-up of $\CC P^1 \times \CC P^1$ satisfies the assumptions of (iv): the fibration is of twisted type and there are exactly $4$ interior vertices. By the theorem above it is realized geometrically by a GKM manifold displaying the same exotic behaviour as the Examples of Tolman, Woodward and Eschenburg. In fact, by systematically studying graph fibrations on the combinatoric side, this method yields infinite families of such exotic Hamiltonian actions with arbitrarily large (finite) fixed point sets.

\subsection{The smooth GKM correspondence in dimension 6}

In our most recent paper \cite{GKZcorrespondence}, we investigated the general GKM correspondence in dimension $6$. Concerning GKM realization, we showed that the necessary conditions for realizability of a $3$-valent GKM graph described in Proposition \ref{prop:necessaryforrealization} are in fact sufficient.  More precisely:
	\begin{thm}\label{thm:we gotem all in dimension 6}
		Let $(\Gamma,\alpha)$ be an abstract $3$-valent $T^2$-GKM graph. Then the following statements hold true:
		\begin{enumerate}
			\item $(\Gamma,\alpha)$ is realizable by a $6$-dimensional simply-connected rational GKM manifold if and only if the graph cohomology $H^*(\Gamma;\alpha,\mathbb{Q})$ satisfies Poincaré duality.
			\item $(\Gamma,\alpha)$ is realizable by a simply-connected integer GKM manifold if and only if additionally the equivariant graph cohomology $H^*_{T^2}(\Gamma,\alpha;\mathbb{Z})$ is a free module over $H^*(BT^2;\mathbb{Z})$.
		\end{enumerate}
	\end{thm}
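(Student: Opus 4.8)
The plan divides along the two directions. Necessity is essentially in hand: part (i) is one half of Proposition~\ref{prop:necessaryforrealization}, and the additional freeness requirement in part (ii) is the integral counterpart noted immediately after it, coming from the integral Chang-Skjelbred lemma together with Proposition~\ref{prop:char classes and everything else}. So the whole weight of the theorem lies in constructing a realizing manifold, and here I would argue by induction on the number $|V(\Gamma)|$ of vertices.

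I would first settle the base cases. For a two-vertex $3$-valent $T^2$-GKM graph the two vertices are joined by three edges with primitive, pairwise linearly independent labels $a,b,c\in\ZZ^2/{\pm1}$, and effectivity forces $a,b,c$ to generate $\ZZ^2$; a direct computation then shows that $H^*(\Gamma,\alpha;\QQ)$ always satisfies Poincaré duality, that $H^*_{T^2}(\Gamma,\alpha;\ZZ)$ is always free, and that the graph is realized by the standard linear $T^2$-action on $S^6$ obtained from a surjection $\ZZ^3\to\ZZ^2$ sending the standard basis to $a,b,c$. I would likewise record as building blocks the explicit realizations of the four-vertex graphs by $\CC P^3$ and by $S^2\times S^4$, and, more importantly, all the projectivizations $\PP(E)\to X$ of Theorem~\ref{thm:gkm fibration}(i), with $X$ equal to $S^4$ or a four-dimensional quasitoric manifold, which realize in particular the $3$-valent $T^2$-GKM graphs that carry the structure of a GKM fibration over a polygon.

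For the inductive step the strategy is to isolate two equivariant geometric operations on $6$-dimensional GKM manifolds that strictly decrease the number of fixed points and whose effect on the GKM graph is purely combinatorial: equivariant connected sum of two GKM manifolds along invariant neighbourhoods of fixed points, which on the combinatorial side glues $\Gamma_1$ and $\Gamma_2$ at vertices with coinciding (up to sign) weight stars; and equivariant blow-down of an invariant $\CC P^2$, respectively of a Hirzebruch surface sitting over an invariant two-sphere, which on the combinatorial side collapses a suitably labelled triangle, respectively quadrilateral, to a single vertex, respectively edge. One then checks that each such move, together with its geometric inverse -- equivariant blow-up, respectively the reconstruction of $M_1\# M_2$ from its summands -- preserves orientability and simple-connectedness, and, by routine but necessary bookkeeping, preserves the Poincaré duality of the graph cohomology as well as, in the integral setting, the freeness of the equivariant graph cohomology and the mild isotropy condition needed for the integral Chang-Skjelbred lemma; for the blow-up this rests on the standard behaviour of the cohomology ring and of the equivariant Pontrjagin and Stiefel-Whitney data (Proposition~\ref{prop:char classes and everything else}), and for the connected sum on a direct computation with the graph-cohomology ideal. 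Granting a combinatorial reduction, one then realizes the smaller graph by the induction hypothesis and applies the inverse operation to obtain a realization of $\Gamma$.

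The hard part, and what I expect to be the main obstacle, is the combinatorial structure theorem feeding the induction: that every abstract $3$-valent $T^2$-GKM graph whose graph cohomology satisfies Poincaré duality -- and, in the integral case, whose equivariant graph cohomology is free -- is either a two-vertex graph, or a GKM fibration over a polygon, or a non-trivial connected sum, or contains a triangle or a quadrilateral that can be blown down without destroying these properties (including the fiberwise-signed subtlety needed to invoke Theorem~\ref{thm:gkm fibration}). Proving this demands a careful local study, at each vertex, of the three primitive weights in $\ZZ^2/{\pm1}$, of the way the compatible connection of Definition~\ref{defn:abstractgkmgraph} forces neighbouring weight stars to fit together along edges, and of the global constraint imposed by Poincaré duality. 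In spirit it parallels the classification of smooth compact toric surfaces as iterated blow-ups of $\CC P^2$ or a Hirzebruch surface, but one works only with a two-torus rather than a torus of full dimension, one must keep the integral freeness alive throughout the reduction, and one must verify that the blow-ups prescribed by the combinatorics can in fact be carried out $T$-equivariantly so as to yield exactly the required edge labels.
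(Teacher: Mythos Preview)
Your plan is a genuinely different route from the one taken in the paper, and the gap lies precisely where you locate it. The paper does \emph{not} proceed by an inductive reduction of the graph to simpler graphs via connected sums and blow-downs. Instead it builds the realizing manifold directly from $(\Gamma,\alpha)$ by equivariant surgery: one first constructs a canonical equivariant thickening of the one-skeleton (a $T^2$-manifold with boundary whose graph of invariant two-spheres is $\Gamma$), then uses the connection to determine ``connection paths'' along which one attaches equivariant $2$-handles. After further handle attachments and modifications one arranges that the boundary action is free with orbit space $S^2\times D^2$, and one caps off with $S^3\times S^1\times T^2$. The Poincar\'e duality hypothesis (and, integrally, the freeness hypothesis) enters in checking that the resulting closed manifold is simply-connected and equivariantly formal with the correct one-skeleton. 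No structure theorem for $3$-valent $T^2$-GKM graphs is invoked.

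The combinatorial structure theorem you propose --- that every such graph with Poincar\'e-dual graph cohomology is a two-vertex graph, or a GKM fibration over a polygon, or a nontrivial connected sum, or admits a blow-downable triangle or quadrilateral --- is not established anywhere and I would not expect it to hold. The analogy with the classification of smooth toric surfaces is misleading: there one has a $T^2$-action on a $4$-manifold (complexity zero), the GKM graph is a \emph{polygon}, and the labels at each vertex form a $\ZZ$-basis; all of this rigidity is what drives the minimal-model argument. In your setting the graph is $3$-valent, the acting torus has complexity one, adjacent weights need not span $\ZZ^2$, and the paper explicitly stresses that the general realization result is ``far more general'' than the fibration case. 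There is no evident mechanism forcing an arbitrary $3$-valent $T^2$-GKM graph with Poincar\'e duality to contain a collapsible triangle or quadrilateral or to split as a connected sum, and manufacturing one would likely be at least as hard as the direct handle construction. Even granting the structure theorem, you would still owe a verification that equivariant blow-up in complexity one can be performed so as to produce \emph{exactly} the prescribed labels on the new edges, and that the integral freeness and isotropy conditions survive each step; neither is automatic. In short, the necessity direction is fine, but for sufficiency your inductive scheme rests on an unproved and probably false reduction lemma, whereas the paper's handle-attachment construction bypasses any such classification.
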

We also investigated the GKM rigidity question for simply-connected $6$-dimensional integer GKM manifolds: it turned out that in case all stabilizers are connected, the GKM graph determines the equivariant homeomorphism type of the manifold, while there are counterexamples in presence of disconnected isotropies. For integer GKM manifolds with connected stabilizers the situation is thus summarized in the following theorem:
\begin{thm}\label{thm:onetotonecorrespondence}
	We have a one-to-one correspondence
	\begin{center}
		\begin{tikzpicture}
			\node (a) at (0,0) {$\left\{\begin{array}{c}
						\text{6-dim.\ simply-connected}    \\
						\text{integer $T^2$-GKM manifolds} \\
						\text{with connected stabilizers}\end{array}\right\}$};

			\node (b) at (8.7,0) {$\left\{\begin{array}{c}
						\text{3-valent GKM graphs with Poincaré}       \\
						\text{duality, free (equivariant) cohomology,} \\
						\text{s.t.\ adjacent weights form a basis}\end{array}\right\}$};

			\draw[<->] (a) -- (b);
		\end{tikzpicture}
	\end{center}
	where the left hand side is considered up to equivariant homeomorphisms, and the right hand side up to isomorphisms of GKM graphs.
\end{thm}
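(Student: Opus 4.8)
The plan is to treat the two directions of the correspondence separately: well-definedness together with surjectivity of the map sending a manifold to its GKM graph, and injectivity, which is the equivariant rigidity statement. For well-definedness, recall that by Propositions \ref{prop:abstract graph} and \ref{prop:necessaryforrealization} the GKM graph of a $6$-dimensional simply-connected integer $T^2$-GKM manifold is a $3$-valent abstract GKM graph whose graph cohomology satisfies Poincaré duality and whose equivariant graph cohomology is a free $H^*(BT^2;\mathbb{Z})$-module. What remains is to see that "all stabilizers connected" translates on the combinatorial side into "adjacent weights form a $\mathbb{Z}$-basis": at a fixed point $p$ the two weights $\alpha,\alpha'$ of two edges are linearly independent by the GKM condition, and if they failed to generate $\mathbb{Z}_\mathfrak{t}^*$ then the finite group $K=\ker\alpha\cap\ker\alpha'\subset T$ would be nontrivial and a component of $M^K$ through $p$ would carry an orbit with isotropy exactly $K$, contradicting connectedness; conversely the basis condition forces the isotropy along the one-skeleton to be connected. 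For surjectivity one invokes Theorem \ref{thm:we gotem all in dimension 6}, observing that under the basis condition the building blocks used there (fixed-point charts modelled on $\mathbb{C}^3$, normal-bundle neighbourhoods of the invariant $2$-spheres, and the regular piece) are glued along genuinely free strata, so the realizing simply-connected integer GKM manifold can be taken with connected stabilizers.

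For injectivity we must show that two such manifolds $M$ and $N$ with isomorphic GKM graphs $(\Gamma,\alpha)$ are equivariantly homeomorphic; the underlying non-equivariant statement is already contained in Theorem \ref{thm:dim6diffeotype}, so the new content is the equivariant upgrade. The strategy is to reconstruct $M$ as a $T^2$-space from $(\Gamma,\alpha)$. First one checks that the hypotheses force the action into general position: by pairwise linear independence of the weights together with Proposition \ref{prop:efhasfixedpoints}, the only positive-dimensional isotropy submanifolds meeting $M^T$ are the invariant $2$-spheres, so the orbit types are just the free one, the circle isotropies along the edges of $\Gamma$, and the fixed points. Combined with the basis condition this makes the orbit space $Q=M/T^2$ a closed topological $4$-manifold in which $\Gamma=M_1/T^2$ sits as an embedded graph, the germ of $Q$ along $\Gamma$ (cone charts at the vertices, disk-bundle charts along the edges) being dictated entirely by the weights $\alpha$.

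It then remains to determine (a) the homeomorphism type of $Q$ and (b) the principal $T^2$-bundle $M\setminus M_1\to Q\setminus\Gamma$ together with the data describing how it is capped off over $\Gamma$; from these $M$ is reassembled $T^2$-equivariantly. Here the two global hypotheses do the work. Simple-connectedness of $M$ implies, since $M_1$ has codimension $\geq 4$, that $M\setminus M_1$ is simply connected, hence so is $Q\setminus\Gamma$, and the homotopy exact sequence of $T^2\to M\setminus M_1\to Q\setminus\Gamma$ severely restricts the classifying data of the bundle; meanwhile the equivariant cohomology $H^*_{T^2}(M;\mathbb{Z})$, which by Proposition \ref{prop:GKMdescription} and \eqref{eq:cohomiso} is already determined by $(\Gamma,\alpha)$, together with its freeness rules out inequivalent cappings. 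Concretely one proves that any two $T^2$-spaces assembled from the same local data along a graph, with simply-connected total space and the same equivariant cohomology, are equivariantly homeomorphic; equivalently, one identifies $(\Gamma,\alpha)$ with the characteristic data classifying general-position complexity-one $T^2$-actions on $6$-manifolds up to equivariant homeomorphism, as in \cite[Theorem 5.5]{Ayzenberg}.

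The main obstacle is precisely step (b): in complexity zero the orbit space is a disk, so there is no room for twisting, whereas here $H^2(Q\setminus\Gamma;\mathbb{Z})$ may be nonzero and a priori admits inequivalent principal $T^2$-bundles as well as inequivalent extensions over $\Gamma$. Establishing that simple-connectedness of the total space together with freeness of the equivariant cohomology -- and no further combinatorial datum -- pins the outcome down is the technical heart of the argument. It is also exactly the point where connectedness of the stabilizers is indispensable: Example \ref{ex:rigidity kaputt in dim 6} exhibits non-homeomorphic $6$-dimensional GKM manifolds with the same GKM graph once nontrivial discrete isotropy is allowed, so the reconstruction above genuinely cannot dispense with that hypothesis.
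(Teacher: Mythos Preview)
Your overall architecture --- splitting into well-definedness, surjectivity via Theorem~\ref{thm:we gotem all in dimension 6}, and injectivity via reconstruction from the orbit space --- matches the paper's. The well-definedness discussion and the observation that the realization can be arranged to have connected stabilizers are fine.

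The gap is in the rigidity argument. You stop short of the two facts that actually make the reconstruction work. First, you only say that $Q=M/T^2$ is ``a closed topological $4$-manifold''; the paper uses the result of Ayzenberg--Masuda that in this situation $Q\cong S^4$. Second, and more importantly, you never explain why the embedding $\Gamma\hookrightarrow Q$ is determined up to ambient homeomorphism. The paper's key point is that the singular set $N/T$ is, under the connected-stabilizer hypothesis, precisely the graph $\Gamma$ sitting inside $S^4$, and since there are no knotted circles in $S^4$, any isomorphism of embedded GKM graphs extends to an ambient homeomorphism of $S^4$; this is then lifted to an equivariant homeomorphism of thickenings and extended over the free complement. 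Your substitute --- that simple-connectedness together with freeness of $H^*_T(M;\mathbb{Z})$ ``rules out inequivalent cappings'' --- is not an argument: you yourself flag it as the ``main obstacle'' and leave it unresolved. Indeed, Example~\ref{ex:rigidity kaputt in dim 6} shows exactly that equivariant cohomology alone (hence the GKM graph) does \emph{not} rule out inequivalent embeddings of the singular set once it is a $2$-sphere rather than a graph; the unknottedness of circles in $S^4$ is what saves the day in the connected-stabilizer case, and this is absent from your sketch.

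Your closing appeal to Ayzenberg's complexity-one classification \cite{Ayzenberg} is legitimate and is mentioned by the paper as an alternative route; but as written it is an outsourcing rather than a proof, and your preceding paragraph does not independently establish the result.
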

We emphasize that, from the point of view of the realization question, the results of \cite{GKZcorrespondence} are far more general than those of \cite{GKZ} explained in the previous section. One can realize all $3$-valent GKM graphs that satisfy the necessary conditions as opposed to only the total spaces of fibrations. However, the realization procedure of \cite{GKZ} via fibrations has the advantage of translating combinatoric properties into the appropriate geometric structures on the realization. It remains an open question which abstract GKM graphs can be realized as almost complex, symplectic, or even Kähler GKM manifolds.

The idea of the realization theorem is to construct a GKM manifold out of a $3$-valent GKM graph by equivariant handle attachments. The GKM graph determines the one-skeleton equivariantly, and we may construct canonically an equivariant thickening of it, similar to \cite[Section 3.1]{GuilleminZaraII}. A connection on the GKM graph determines so-called connection paths which are determined by any two adjacent edges, by successively sliding them forward along one another using the connection. We attach equivariant $2$-handles to the boundary of the equivariant thickening along curves corresponding to the connection paths.
Gluing one handle for every connection path, we obtain a $T^2$-manifold with free action on the boundary, whose orbit space deformation retracts to the surface $F$ obtained by attaching discs to any connection path in the GKM graph. After some further handle attachments and modifications we can achieve that this orbit space is $S^2\times D^2$, where  $F$ is $S^2\times \{0\}$, and we obtain a closed manifold by capping this off with $S^3\times S^1\times T^2$.

Regarding the proof of the rigidity part of the statement the idea is that one knows the orbit space of such an action to be $S^4$ by a result of \cite{AyzenbergMasuda}. In the case of connected stabilizers the orbit space of the nonregular orbits is just the GKM graph which is embedded in some fashion in the orbit space $S^4$. As there are no knotted circles in $S^4$ any isomorphism between embedded GKM graphs extends to an ambient homeomorphism of the surrounding copies of $S^4$. We lift this argument to the equivariant world by first constructing an equivariant homeomorphism of the aforementioned thickening of the graph and then extending it over the free complements. The transfer to the equivariant world can also be achieved using the complexity $1$ rigidity results of Ayzenberg mentioned in Remark \ref{rem:rigidity}.

\begin{ex}\label{ex:rigidity kaputt in dim 6}
	We sketch an example of equivariant non-rigidity in dimension $6$ in presence of nontrivial discrete isotropies, taken from \cite[Section 5]{GKZcorrespondence}. Consider the $T:=T^2$-action on $M:=S^2\times S^2\times S^2$ given by the pullback of the standard componentwise $T^3$-action via $(s,t)\mapsto (s,st^2,st^5)$. The orbit space of the $T^3$-action is a cube, the one-skeleton of which is the GKM graph of both the $T^2$- and the $T^3$-action. Considering the $T$-graph, at each vertex three edges with the labels $(1,0)$, $(1,2)$ and $(1,5)$ meet, which have the property that they span $\ZZ^2$, but any two of them do not form an integer basis of $\ZZ^2$.  The orbit space of the $T^2$-action is $S^4$; considering
	\[
		N:=\{p\in M\mid T_p\neq \{e\}\}\subset M,
	\]
	the set of points with nontrivial isotropy, we have that its orbit space $N/T$ is given by the boundary of the cube, i.e., it is homeomorphic to $S^2\subset S^4$. A space equivariantly homeomorphic to $M$ does not only need to have the same orbit space, but also the way in which the set $S^2$ of singular orbits is embedded in $S^4$ has to be the same as the embedding $N/T\subset M/T$.

	It was shown by Artin \cite{Artin} that there exist knotted two-spheres in $S^4$: he considered arcs in the real half space $\{(x_1,x_2,x_3)\mid x_3\geq 0\}$ starting and ending in the hyperplane $x_3=0$ and rotated it around the plane $x_3=x_4=0$ to obtain a sphere in $\RR^4$, hence by one-point compactification in $S^4$. For any such arc, the complement of the resulting sphere in $S^4$ has the same fundamental group as the complement of the knot in $S^3$ obtained by joining the end points of the initial arc. Hence starting with a non-trivial knot we obtain a non-trivially knotted $2$-sphere.

	Returning to the original construction we note that a small closed equivariant neighborhood $Y$ of $N$ in $M$ has orbit space $S^2\times D^2$, and satisfies $\partial Y = S^2\times S^1\times T$. Now let $Z$ be the complement of a small tube $S^2\times D^2$ around a knotted $S^2\subset S^4$. We replace in our space above the complement of $Y$ in $M$ by $Z\times T$ and obtain a new simply-connected integer GKM manifold. In other words, we realize $N/T$ as a knotted sphere in the orbit space $S^4$ of a $T$-action. For different choices of $Z$ we obtain examples of GKM manifolds that are not equivariantly homeomorphic, although they all share the same GKM graph.

	The idea behind this example is that knotting phenomena are typically not detected by cohomology, in the same way as all knot groups, i.e., fundamental groups of complements of knots in $S^3$ have the same abelianization, namely $\ZZ$. Thus the GKM graph of an action, which (at least for rational coefficients) contains the same information as the equivariant cohomology of the action \cite{FranzYamanaka} (see also \cite{GZreconstruct}), is too weak an invariant to distinguish such $T$-spaces.
\end{ex}

\bibliographystyle{acm}
\bibliography{GKMsurvey}

\end{document}